\documentclass[12pt,a4paper,oneside,reqno]{amsart}

\usepackage[margin=2.5cm]{geometry}

\usepackage[utf8]{inputenc}
\usepackage{amsmath,amsfonts,bbm}
\usepackage{mathtools}
\usepackage{xspace}
\usepackage[english]{babel}
\usepackage[section]{placeins}
\usepackage{verbatim}
\usepackage{caption}
\usepackage{graphicx,color}
\usepackage[pdfusetitle]{hyperref}

\newtheorem{theorem}             {Theorem}[]

\newtheorem{definition} [theorem]{Definition}
\newtheorem{hypothesis} [theorem]{Hypothesis}


\newcommand{\R}{\mathbb{R}}

\newcommand{\T}{\textup{T}}     
\newcommand{\D}{\textup{D}}     
\renewcommand{\d}{{\textup{d}}} 
\newcommand{\Ddist}{\mathcal{D}}  

\newcommand{\der} [2]{\frac{\textrm{d} #1}{\textrm{d} #2}}
\newcommand{\pder}[2]{\frac{\partial #1}{\partial #2}}


\DeclarePairedDelimiter\norm{\lVert}{\rVert}

\DeclarePairedDelimiterX\inner[2]{\langle}{\rangle}{#1 , #2}
\DeclarePairedDelimiterX\poisson[2]{\{}{\}}{#1 , #2}
\DeclarePairedDelimiterX\lie[2]{[}{]}{#1 , #2}
\DeclarePairedDelimiterX\set[2]{\{}{\}}{#1 \mathrel{\delimsize|} #2}

\newcommand{\pr}{\textrm{pr}}

\newcommand{\NHIM}{\textsc{nhim}\xspace}

\newcommand{\ns}{\hphantom{-}}

\title[Realizing nonholonomic dynamics as limit of friction forces]%
{Realizing nonholonomic dynamics\\ as limit of friction forces}

\author{Jaap Eldering}
\email{jaap@jaapeldering.nl}
\address{%
  Universidade de São Paulo -- ICMC \\
  Avenida Trabalhador São-carlense 400 \\
  CEP 13566-590, São Carlos, SP, Brazil
}

\date{\today}

\begin{document}

\begin{abstract}
The classical question whether nonholonomic dynamics is realized as
limit of friction forces was first posed by Carathéodory. It is
known that, indeed, when friction forces are scaled to infinity, then
nonholonomic dynamics is obtained as a singular limit.

Our results are twofold. First, we formulate the problem in a
differential geometric context. Using modern geometric singular
perturbation theory in our proof, we then obtain a sharp statement on
the convergence of solutions on infinite time intervals.
Secondly, we set up an explicit scheme to approximate systems with
large friction by a perturbation of the nonholonomic dynamics. The
theory is illustrated in detail by studying analytically and
numerically the Chaplygin sleigh as example. This approximation scheme
offers a reduction in dimension and has potential use in applications.

MSC2010 numbers: 37J60, 70F40, 37D10, 70H09

Keywords: nonholonomic dynamics, friction, constraint realization,
singular perturbation theory, Lagrange mechanics
\end{abstract}

%

\maketitle

\section{Introduction}

Nonholonomic dynamics is a classical subject in mechanics that has
seen an increase of activity in the last decades with people studying
conserved quantities, symmetries and integrability, numerical
integrators, as well as various toy problems with intricate behavior
--- the rattleback and tippe top being the most famous ones --- and
has many applications in engineering sciences such as robotics, see
e.g.~\cite{Leon2012:hist-nonhol-mech, Bloch2003:nonholmech}.

We consider the fundamental question whether nonholonomic dynamics is
realized as the idealization, or limit, of large friction forces. Our
main results are twofold. First, we reprove previous results by
Brendelev~\cite{Brendelev1981:realnonhol} and
Karapetian~\cite{Karapetian1981:realnonhol} that answer the question
in the affirmative, but we treat the problem in a differential geometric setting,
see Theorem~\ref{thm:friction-limit}. Our use of geometric singular
perturbation theory provides an improved statement on the convergence
of solution curves on infinite time intervals. The examples in
Section~\ref{sec:diff-constraints} show that our result in general is
sharp. Secondly, we obtain an approximation scheme for the large
friction dynamics by an expansion in the singular perturbation
parameter. This allows modeling this non-ideal, large friction
dynamics on the reduced nonholonomic phase space with a modification
term added to the nonholonomic system. This has potential applications
for simplified modeling of systems where some slippage occurs, e.g.~in
explaining tippe top
inversion~\cite{Bou-Rabee2004:dissipation-tippetop} as well as in
engineering sciences, such as control of robots or submerged vehicles
where slipping/drift may become important, see
e.g.~\cite{Sidek2008:model-nonhol-robot-slip,
  Fedorov2010:hydro-chaplygin-sleigh}. As an example, we apply this
approximation scheme to the Chaplygin sleigh, see
Section~\ref{sec:sleigh-expand} and
Figures~\ref{fig:sleigh-allphaseplots}
and~\ref{fig:sleigh-alltrajplots}.

\subsection{Realizing constraints as idealizations}

Constraints in mechanical dynamical systems should often be seen as
simplifying idealizations of more intricate underlying models.
Examples are ubiquitous. A pendulum can be modeled as a point mass
moving on a circle --- or a sphere in 3D --- under gravity, but in physical
reality, the constraint of staying on the circle is only approximately
realized by a very stiff rod that connects the mass to the point of
suspension. A rigid body can be considered an approximation of a large
number of atoms, or as a continuous medium with a strong interaction
potential that keeps the body rigid. Finally, a smooth, convex body
rolling on a surface without slipping, or a figure skater sliding
without sideways movement are examples of the idealization of
friction forces, and modeled by nonholonomic mechanics.

The examples above show how both holonomically and nonholonomically
constrained systems can be considered idealizations of larger,
unconstrained systems under strong potentials or friction forces.
On the other hand, the dynamics for constrained systems is
conventionally postulated to be given by Lagrange's variational
principle (which is equivalent to Newton's laws in the unconstrained
case) together with d'Alembert's principle that reaction forces do no
work along virtual displacements that satisfy the constraints.
Thus, a natural question is whether these two different viewpoints
yield corresponding and experimentally correct equations of motion
for a given system.

In this paper we revisit the question whether friction forces can
realize nonholonomically\footnote{%
  The realization of holonomic constraints by strong potentials has
  been studied in~\cite{Rubin1957:motion-constr,
    Takens1980:motion-constr, Kozlov1990:real-hol-constr} and other
  papers. In that case the limit is somewhat subtle, and depends on
  the energy in oscillatory modes normal to the constrained manifold.
} constrained systems. That is, when we consider a mechanical system
with friction forces acting in only some of the directions and then
scale this friction to infinity, do we recover nonholonomic dynamics
in an appropriate limiting sense? This question dates back at least
to Carathéodory, who considered it for the Chaplygin
sleigh~\cite{Caratheodory1933:schlitten}, see
Section~\ref{sec:history} below for a brief history.

A second, related question is whether indeed the Lagrange--d'Alembert
principle for nonholonomic dynamics is `correct'. The relevance of
this question is indicated by the confusion in the late 19th century
about the correct formulation of nonholonomic dynamics,
see~\cite[p.~46--47]{Cushman2010:geom-nonhol-systems}
and~\cite[Sec.~3.2.3]{Marle1998:approaches-nonhol} for a discussion.
An alternative method to obtain equations of motion for
nonholonomically constrained systems, called `vakonomic mechanics',
was proposed by Kozlov~\cite{Kozlov1983:real-nonint-constr}. This
method produces different dynamics\footnote{%
  Vershik and Gershkovich showed that in a generic system with
  nonholonomic constraints the vakonomic and Lagrange--d'Alembert
  solutions are
  incompatible~\cite[Sec.~4.3]{Vershik1988:nonhol-theory-distrib}.%
}, and hence raises the question which method is `correct'. The quotes are
used since such questions ultimately have to be decided by physical
experiment, and the answer may depend on the kind of system studied.
For rolling mechanical systems the general consensus seems to be that
the d'Alembert principle is correct. Indeed, Lewis and Murray verified
experimentally that for a ball rolling on a turntable, nonholonomic
dynamics gives a better description than vakonomic
mechanics~\cite{Lewis1995:var-constrsys-th-exp}. On the other hand,
vakonomic dynamics, satisfying a variational principle, is applicable
in control theory and the motion of rigid bodies in
fluids~\cite{Kozlov1982:dyn-nonint-constr1,Kozlov1982:dyn-nonint-constr2}
as cited in~\cite{Fedorov2010:hydro-chaplygin-sleigh}. Alternative
methods to realize nonholonomic dynamics besides friction have been
proposed too. Bloch and Rojo~\cite{Bloch2008:quant-nonhol} (see
also~\cite{Bloch2010:nonhol-dissip-quant}) used coupling to an
external field to obtain nonholonomic dynamics\footnote{%
  Although the nonholonomic system appears as a slow manifold of this
  infinite-dimensional coupled system, the third line
  in~\cite[Eq.~(9)]{Bloch2008:quant-nonhol} seems to indicate that the
  fast dynamics of $\theta$ around $\alpha(x,t)$ is oscillatory,
  i.e.\ the normal directions are elliptic. Thus, convergence of the
  limit dynamics is not clear to me.%
}, while Ruina~\cite{Ruina1998:nonhol-stab-piecewise} shows that
mechanical systems with intermittent contact, i.e.\ with piecewise
holonomic constraints, can be viewed as nonholonomic systems in the
limit as the contact switches more frequently.

Although these `correctness' questions should be decided by
experiment, the idea of trying to realize constraint dynamics from the
unconstrained system using an underlying principle can offer
theoretical insights. For example, whereas nonholonomic dynamics based
on d'Alembert's principle can be viewed as a limit of friction forces,
vakonomic mechanics can be viewed as letting the mass in the
mechanical metric go to infinity along constrained
directions~\cite{Kozlov1983:real-nonint-constr}. See
also~\cite{Kozlov1992:realconstr} for a discussion of various methods
to realize constraints in dynamics
and~\cite[Sec.~0.3]{Borisov2015:exp-motion-sliding} for a review of
the applicability of nonholonomic dynamics to sliding and rolling
bodies.

Showing that a fundamental principle such as friction --- or large
mass terms in the vakonomic principle --- can realize the constrained
system provides a fundamental justification for the constrained
equations of motion as an idealized limit. But conversely, given
constrained equations of motion, it also allows one to study under
precisely which conditions the underlying principle leads to this
realization. For example, in the case of friction realizing nonholonomic
dynamics: instead of linear (viscous) friction forces, do Coulomb-like
forces also realize nonholonomic dynamics?
In~\cite{Kozlov2010:dry-fric-nonhol} this is considered for the
specific case of a homogeneous rolling ball. Finally, when the limit is
sufficiently well-behaved, e.g.\ in the case of linear friction, one can
moreover calculate correction terms to the idealized dynamics. For the
friction limit, these represent the effect of large, but finite
friction forces, see Section~\ref{sec:beyond-limit}.

\subsection{A brief historic overview}
\label{sec:history}

To our knowledge, Carathéodory~\cite{Caratheodory1933:schlitten} first
explicitly addressed the question whether friction forces can realize
nonholonomic dynamics, although his starting sentence that
``nonholonomic motion in mechanics is known to be caused by friction
forces'' indicates that the idea is older. His study of the Chaplygin
sleigh (without attributing it to
Chaplygin~\cite{Chaplygin1911:motion-nonhol}) includes a complete
solution in quadratures and finally addresses the question whether the dynamics can
be realized by a viscous friction force. Carathéodory's conclusion is
negative, but this is due to an error in his reasoning as
discussed thoroughly in~\cite{Fufaev1964:real-nonhol-visc-fric} (see
also~\cite[p.~233--237]{Neimark1972:dynnonhol}).
That is, Carathéodory fixes initial conditions for the
velocity perpendicular to the skate blade, but these are not on the
slow manifold associated to the nonholonomic dynamics. See also the
exposition in Section~\ref{sec:sleigh}, but note that compared
to~\cite{Caratheodory1933:schlitten,Fufaev1964:real-nonhol-visc-fric} we retain $v$,
the velocity perpendicular to the sleigh's skate blade, instead of
rewriting it into a second-order equation for $\omega$, the angular
velocity of the sleigh.

The realization of nonholonomic constraints by friction forces was
independently shown by Brendelev~\cite{Brendelev1981:realnonhol} and
Karapetian~\cite{Karapetian1981:realnonhol}. They proved that indeed
an unconstrained system with viscous friction forces added to it will
in the limit converge to the nonholonomic system whose constraint
distribution is defined by the friction forces being zero on it. The
limit is understood to be in the sense that solution curves of the
unconstrained system converge to solutions of the nonholonomically
constrained system, uniformly on each bounded time interval $[t_1,T]$
with $0 < t_1 < T$, as the friction is scaled to infinity. Both
authors used Tikhonov's theorem~\cite{Tikhonov1952:sys-DE-small-param}
for singularly perturbed systems to obtain this result in a local
coordinate setting. Kozlov combined this friction limit into a larger
framework including both friction and inertial terms, obtaining both
nonholonomic and vakonomic dynamics as limits in special
cases~\cite{Kozlov1992:realconstr}.

\subsection{Outline and contributions of the paper}

After the introduction, we first briefly review nonholonomic dynamics
according to the Lagrange--d'Alembert principle.
In Section~\ref{sec:diff-constraints} we show how different approaches
using potentials or friction to realize the same constrained
system yield qualitatively different solutions, arbitrarily close to
their respective limits. Then we start with the main part of the paper
and present how friction forces realize nonholonomic dynamics. First
with the Chaplygin sleigh as example. In Section~\ref{sec:sleigh} we
derive its equations of motion as a nonholonomic system and in
Section~\ref{sec:sleigh-friction} as an unconstrained system, but with
friction added. Secondly we treat
the general theory in Section~\ref{sec:general}. We show how the
Lagrange--d'Alembert equations are recovered as a limit, using modern
geometric singular perturbation theory. In keeping the exposition
clear, we shall not try to attain the utmost generality.
That is, we restrict to Lagrangian systems of mechanical type and
consider only time-independent, linear friction forces. This means
that we only recover linear nonholonomic constraints\footnote{%
  Affine constraints do appear naturally, for example, for a ball
  rolling on a turntable. However, fully nonlinear constraints in
  mechanical systems seem scarce: according
  to~\cite[p.~213,223--233]{Neimark1972:dynnonhol} all known examples
  are variations of the one due to
  Appell~\cite{Appell1911:ex-mouv-rel-nonlin-vitesse}. See
  also~\cite{Marle1998:approaches-nonhol} for a discussion of
  nonholonomic constraint principles, including nonlinear
  constraints.%
}. Instead, we shall present various simple examples to illustrate
some of the issues discussed above.

The first aim of this paper is to provide an accessible
exposition of the problem of realizing nonholonomically constrained
systems as a limit of infinite friction forces, using the Chaplygin
sleigh as a canonical example. Secondly, we
present the problem in a more geometric setting
than~\cite{Brendelev1981:realnonhol,Karapetian1981:realnonhol}. That
is, we express the problem in a differential geometric formulation and
use geometric singular perturbation theory, as developed by
Fenichel~\cite{Fenichel1979:singpertODE} and  based on the theory of
normally hyperbolic invariant manifolds (abbreviated as \NHIM), see
e.g.~\cite{Fenichel1971:invarmflds, Hirsch1977:invarmflds,
  Eldering2013:NHIM-noncompact}. This allows us to phrase the results
independently of coordinate charts and provide improvements on
the results obtained in~\cite{Brendelev1981:realnonhol,
  Karapetian1981:realnonhol}; in particular, we can describe more
precisely in what sense the solution curves converge on positively
unbounded time intervals. Moreover, persistence of the \NHIM allows us
to rigorously study a perturbation expansion of the nonholonomic
system away from the infinite friction limit and obtain corrections to
the nonholonomic dynamics as an expansion for small $\varepsilon > 0$,
where $1/\varepsilon$ is the scale-factor multiplying the friction
forces. This can be used to describe systems with large, but finite
friction forces as nonholonomic systems with small correction terms.
We derive general formulas for this expansion and illustrate their
use with an analytical and numerical study of the
approximation of the Chaplygin sleigh with large friction in
Section~\ref{sec:sleigh-expand}. Efforts towards obtaining such a
general expansion were made in~\cite{Wang1996:creep-dyn-nonhol}, which
studies `creep dynamics' for a few example systems. For the Chaplygin
sleigh they find the same first order correction term (our $h^{(1)}$)
to the invariant manifold $\Ddist_\varepsilon$,
but their modified equations of motion are not specified.

\section{Nonholonomic dynamics}
\label{sec:NH-dynamics}

Let us here briefly recall the Lagrange--d'Alembert formalism for
nonholonomic dynamics and establish notation. A Lagrangian system is
given by a Lagrangian function $L\colon \T Q \to \R$, where $Q$ is an
$n$-dimensional smooth manifold. We shall assume that the Lagrangian
is of mechanical type, i.e.\ of the form
\begin{equation}\label{eq:Lagrangian}
  L(q,\dot{q}) = \frac{1}{2}\kappa_q(\dot{q},\dot{q}) - V(q),
  \qquad (q,\dot{q}) \in \T Q,
\end{equation}
where $\frac{1}{2}\kappa_q(\dot{q},\dot{q})$ is the kinetic energy,
which is given by a Riemannian metric $\kappa$ on $Q$, and
$V\colon Q \to \R$ is the potential energy. This implies that $L$ is
hyperregular and that the Euler--Lagrange equations of motion have
unique solutions. Let $q$ be local coordinates on $Q$ and let
$(q,\dot{q})$ denote induced coordinates on $\T Q$. Then the
Euler--Lagrange equations are given by\footnote{%
  Note that $[L] := [L]_i\,\d q^i$ can intrinsically be viewed as a
  map from $J_0^2(\R;Q)$, the space of second-order jets of curves in
  $Q$, to $\T^* Q$, covering the identity map on $Q$.%
}
\begin{equation}\label{eq:EL-equations}
  [L]_i := \der{}{t} \pder{L}{\dot{q}^i} - \pder{L}{q^i} = 0.
\end{equation}

A linear nonholonomic constraint is imposed on this system by
specifying a regular distribution, i.e.\ a smooth, constant rank $k$
vector subbundle, $\Ddist \subset \T Q$. Kinematically, this
constraint specifies the allowed velocities of the system, that is, a
velocity vector $\dot{q} \in T_q Q$ is allowed precisely if
$\dot{q} \in \Ddist_q$.

To specify the nonholonomic dynamics with this constraint, an extra
principle has to be imposed, as solutions of~\eqref{eq:EL-equations}
would generally violate the kinematic constraint imposed by $\Ddist$.
To guarantee that the solutions satisfy the constraint, we add a
constraint reaction force $F_c$ to the right-hand side
of~\eqref{eq:EL-equations} according to d'Alembert's principle, see
e.g.~\cite[p.~4--6]{Cushman2010:geom-nonhol-systems}.
\begin{hypothesis}[d'Alembert's principle]\label{hyp:dAlembert}
  The constraint forces $F_c$ do no work along movements that are
  compatible with the constraints. That is, for any
  $(q,\dot{q}) \in \Ddist$ we have that
  $F_c(q,\dot{q}) \cdot \dot{q} = 0$.
\end{hypothesis}
From now on we shall implicitly assume this principle for
nonholonomically constrained systems. This leads to the
Lagrange--d'Alembert equations, which again have unique solutions.
Abstractly, these state that a smooth curve $c\colon (t_0,t_1) \to Q$
is a solution of the equations if for all $t \in (t_0,t_1)$
\begin{equation}\label{eq:LdA-equations}
  [L]\bigl(\ddot{c}(t)\bigr) \in \Ddist^0
  \qquad\text{and}\qquad
  \dot{c}(t) \in \Ddist.
\end{equation}
Here $\Ddist^0 \subset \T^* Q$ denotes the annihilator of $\Ddist$,
that is, all covectors $\alpha \in \T^* Q$ such that $\alpha(\dot{q}) = 0$
for any $\dot{q} \in \Ddist_{\pi(\alpha)}$.

Let $\Ddist$ locally be given by a set of $n-k$ independent constraint
one-forms $\zeta^a = C^a_i(q) \, \d q^i$,
i.e.~$\Ddist = \set{(q,\dot{q}) \in \T Q}{\forall a \in [1,n\!-\!k]\colon \zeta^a_q(\dot{q})=0}$.
Then the Lagrange--d'Alembert equations in local coordinates are of
the form
\begin{equation}\label{eq:LdA-coords}
  \der{}{t}\pder{L}{\dot{q}^i} - \pder{L}{q^i} = \lambda_a\,C^a_i(q)
  \qquad\text{and}\qquad
  \zeta^a_q(\dot{q}) = 0,
\end{equation}
where $\lambda_a$ are Lagrange multipliers that have to be solved for,
and the constraint force is given by
$F_c = \lambda_a\,\zeta^a$.

\section{An example of different constraint realizations}
\label{sec:diff-constraints}

The following toy example illustrates how different choices for
realizing the same constrained system lead to qualitatively different
dynamics close to the limit. These qualitative differences are
essentially due to the fact that the limits of sending time to
infinity or a singular perturbation parameter to zero do not
commute. A nice illustration of this simple fact is given by
Arnold~\cite[p.~65]{Arnold1978:ODEs}. Consider a bucket of water with
a hole of size $\varepsilon$ in the bottom. After any fixed, finite
time $t$ the water level remains unchanged as $\varepsilon \to 0$, but
for any $\varepsilon > 0$ the bucket becomes empty as $t \to \infty$.

We consider the mathematical pendulum in the vertical plane. We view
it as a system constrained to the circle in a holonomic, nonholonomic
and vakonomic sense. That is, we view the constraint as generated by a
strong potential, strong friction forces and large inertia, and
compare the dynamics when the respective constraining terms
are large but finite.

The unconstrained system is given by the Lagrangian
\begin{equation}\label{eq:pendulum-Lagr}
  L(x,y,\dot{x},\dot{y}) = \frac{1}{2}(\dot{x}^2 + \dot{y}^2) - g\,y
\end{equation}
on $\T \R^2$, where $g$ is the gravitational acceleration and a unit
mass has been chosen.

Viewing the pendulum as a mass attached to the origin by a perfectly
stiff rod, we can choose as constraining potential
\begin{equation}\label{eq:pendulum-constr-pot}
  V_\varepsilon(x,y) = \frac{1}{2\varepsilon}(\sqrt{x^2 + y^2} - 1)^2,
\end{equation}
that is, the square distance from the constrained submanifold
$S^1 \subset \R^2$, modeling a perfectly stiff rod as
$\varepsilon \to 0$. We can now easily apply the results
from~\cite{Takens1980:motion-constr} as $S^1$ is a codimension one
manifold and the constraining potential has constant second derivative
in the normal direction. It follows that the limit motion is precisely
given by the standard pendulum on $S^1$. Energy conservation also
makes it immediately clear that for small $\varepsilon$, solutions
stay close to the constraint manifold $S^1$. See the top image in
Figure~\ref{fig:pendulum}.

Secondly, the constrained system can be obtained from a
`nonholonomic' limit of adding friction forces in the radial
direction. This can be thought of as a model for a leaf falling or
`swirling down' under gravity where the leaf surface is tangential to
circles around the origin and there is a large air resistance against
perpendicular movement. We model this air resistance by the friction
force defined by the Rayleigh function
\begin{equation}\label{eq:pendulum-constr-fric}
  R_\varepsilon(x,y,\dot{x},\dot{y})
  = \frac{1}{2\varepsilon}\Bigl(\frac{x \dot{x} + y \dot{y}}{\sqrt{x^2 + y^2}}\Bigr)^2,
\end{equation}
or $R_\varepsilon = \dot{r}^2/(2\varepsilon)$ in polar coordinates. The
friction force is given by $F = -\frac{\partial R}{\partial \dot{q}}$. In
the limit of $\varepsilon \to 0$ this gives rise to a nonholonomic
system whose constraint distribution is actually integrable; the
associated foliation consists of concentric circles. Although the
limit dynamics is the same as with the constraining potential, the
dynamics for any finite $\varepsilon > 0$ is qualitatively different:
the leaf will not stay close to the original submanifold $S^1$, but slowly
drift down. This can most easily be seen for a leaf with initial
conditions $x = \dot{x} = 0$. Under the combined forces of gravity and
friction, it will settle to a vertical speed of
$\dot{y} = -\varepsilon\,g$. Hence, over long times the constraint
manifold $S^1$ is not even approximately preserved. See the middle
image in Figure~\ref{fig:pendulum}.

Finally, we consider the system as a limit of adding a large inertia
term in the radial direction. That is, we add to the Lagrangian a term
\begin{equation}\label{eq:pendulum-constr-vako}
  I = \frac{1}{2\varepsilon}\Bigl(\frac{x \dot{x} + y \dot{y}}{\sqrt{x^2 + y^2}}\Bigr)^2,
\end{equation}
or in polar coordinates $I = \dot{r}^2/(2\varepsilon)$. In the limit
of $\varepsilon \to 0$ this realizes vakonomic dynamics,
see~\cite{Kozlov1983:real-nonint-constr}. Since the constraint
distribution is integrable, the resulting limit dynamics is equal to
the nonholonomic limit with friction and the holonomic limit with
potential forces on $S^1$. The dynamics for finite $\varepsilon$
resembles most closely that of the friction model, but there is a
difference that can most clearly be observed by considering again the
initial conditions $x = \dot{x} = 0$. In this case the equation for
$y$ takes the form
\begin{equation*}
  (1+\frac{1}{\varepsilon})\ddot{y} = -g.
\end{equation*}
Thus, instead of settling on a slow descend, the particle accelerates
downward, but with a slowed acceleration due to the added inertial
term. See the bottom image in Figure~\ref{fig:pendulum}.

The conclusion is that even though for all three constraint
realization methods, solutions converge on a fixed time interval to
the solutions of the constraint system --- which \emph{in this case}
is identical for the three methods --- this does not hold true on
unbounded time intervals. Indeed, for these three different methods,
we see three different behaviors when $t \to \infty$ for fixed
$\varepsilon > 0$. With the constraining potential the system stays
uniformly\footnote{%
  However, solutions do not uniformly in time converge to the
  constrained solutions. Consider as a counterexample the pendulum
  without gravity and an initial velocity along $S^1$: in the case of
  a finite potential, the particle will oscillate slightly away from
  $S^1$, hence trade some of its kinetic for potential energy and thus
  have on average a lower angular velocity than the perfectly
  constrained particle. This builds up over time to arbitrary large
  errors in position.%
} close to the initial leaf $S^1$ of the constraint distribution
$\Ddist = \{ \dot{r} = 0 \}$. With the friction force, this is not true
anymore, but the solution does stay close to $\Ddist$ as a submanifold
of $\T\R^2$. This is intuitively clear in the current example as
friction prevents $\dot{r}$ from growing large, and can be proven in
general under reasonable conditions, see Section~\ref{sec:general}.
Finally, with the inertial term, the pendulum example shows that
solutions can even diverge arbitrarily far from $\Ddist$.

\begin{figure}[bp]
  \centering
  \includegraphics[width=10cm]{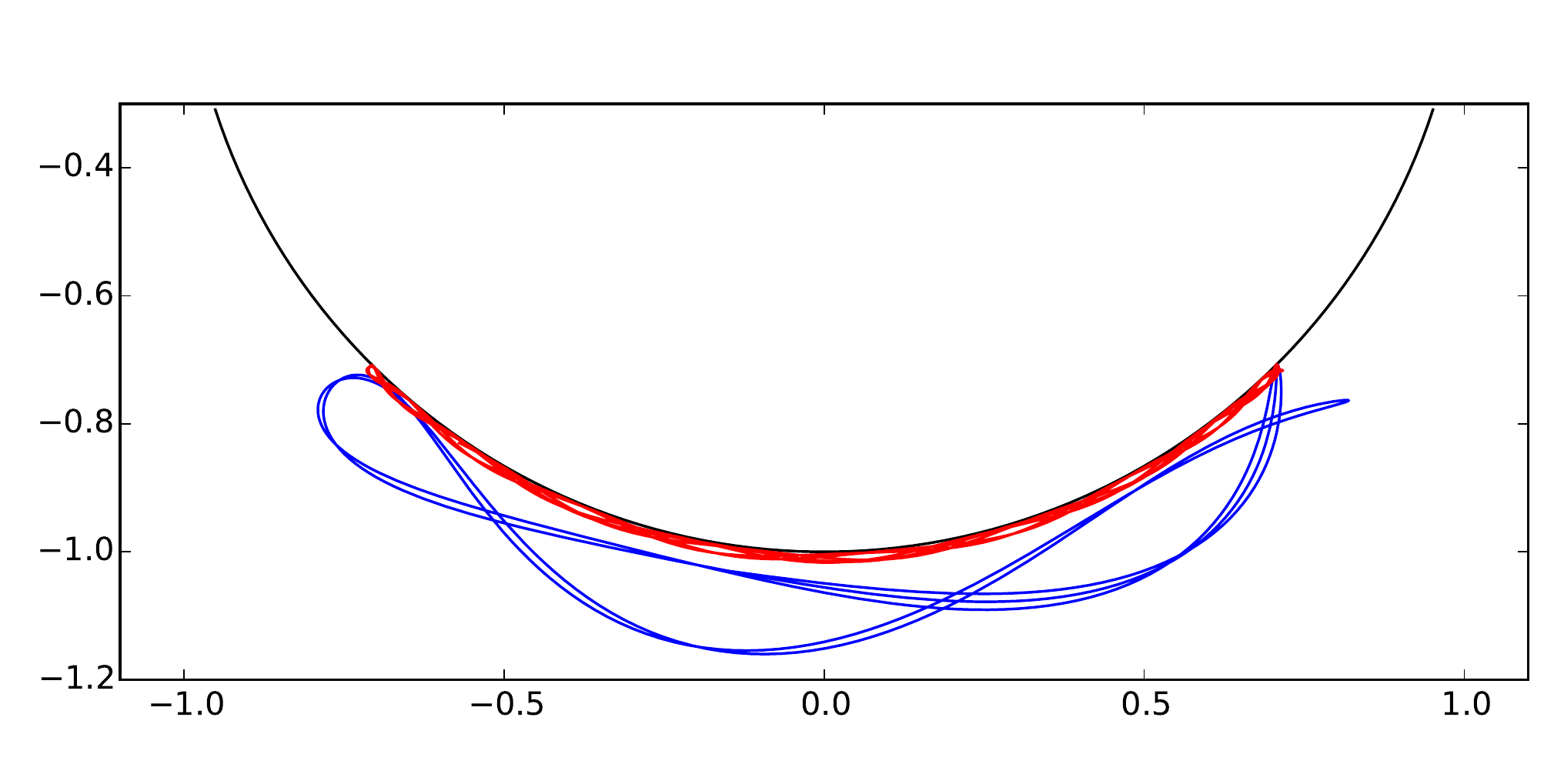}\\
  \includegraphics[width=10cm]{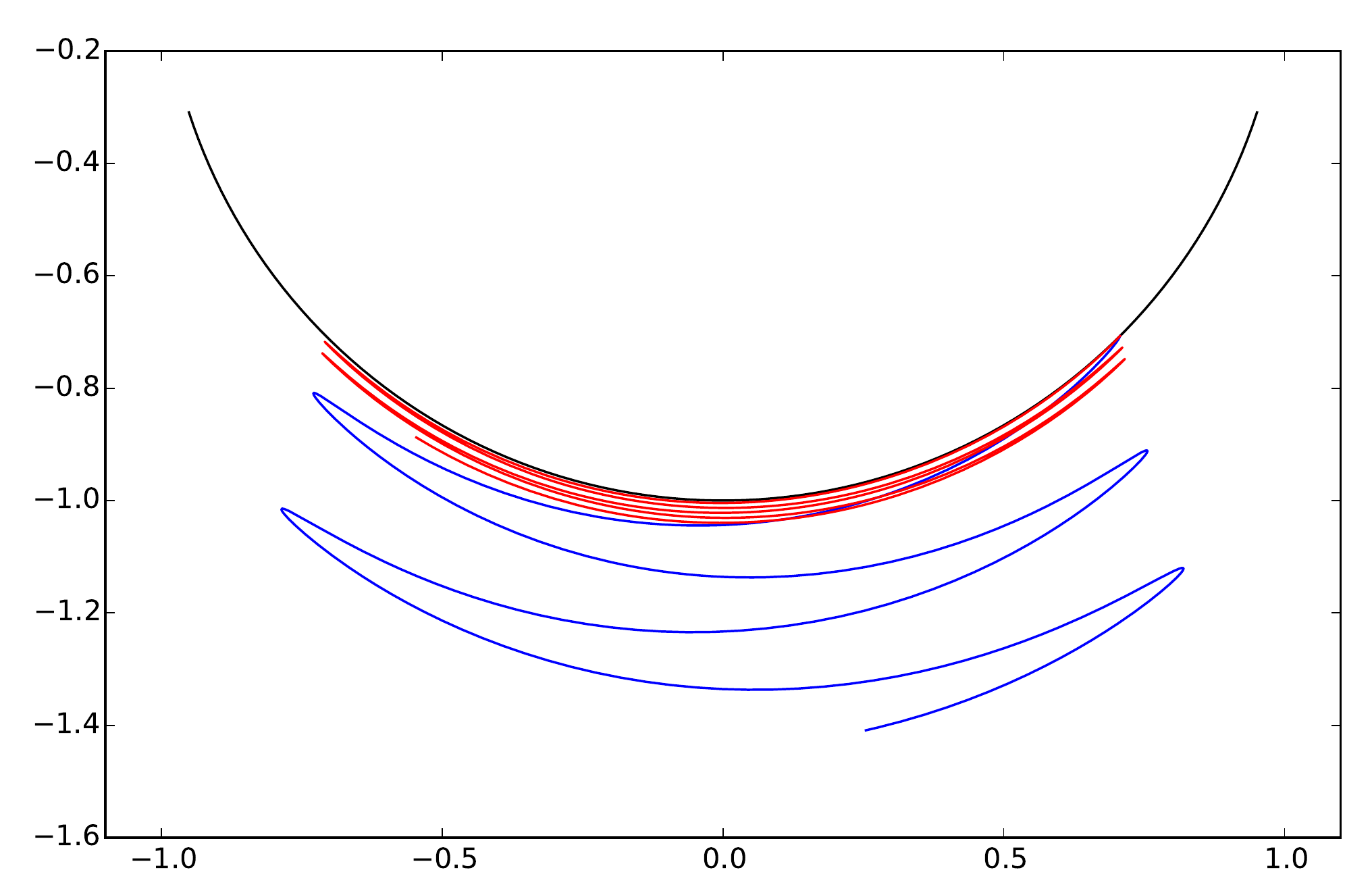}\\[10pt]
  \hspace{0.2cm}\includegraphics[width=10cm]{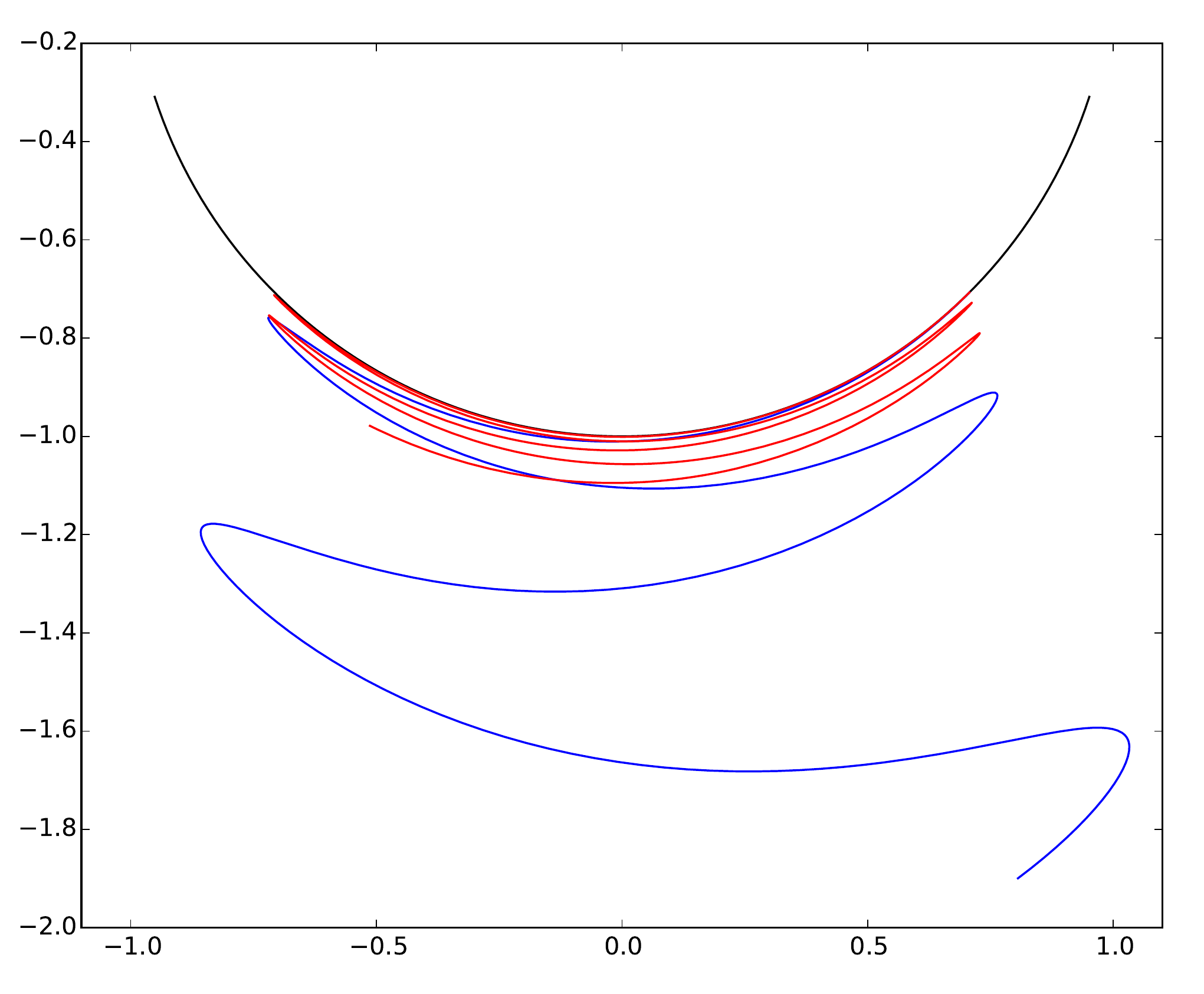}\\
  \caption{Top to bottom: numerical simulations of  potential, frictional
    and inertial constraint approximations. The particle initially
    starts on $S^1$ at $45^\circ$ angle and with zero velocity. Its
    trajectory is shown for $10$ seconds with $g = 9.81$ and values
    $\varepsilon = 0.01$ in blue and $\varepsilon = 0.001$ in red.}
  \label{fig:pendulum}
\end{figure}

\section{The Chaplygin sleigh}
\label{sec:sleigh}

The Chaplygin sleigh~\cite{Chaplygin1911:motion-nonhol}
(see~\cite{Chaplygin1911:motion-nonhol-english2008} for a translation
into English) is a simple, yet interesting example of a mechanical
nonholonomic system. The sleigh is a body that can move on the plane,
but one of its contact points is a skate, see Figure~\ref{fig:sleigh}.
Alternatively, one can think of the contact point as a wheel that is
fixed to the body. The contact point can only move in the direction
along the skate blade/wheel, not in the perpendicular direction. The
other two ground contacts can move freely without constraint. The
center of mass is located a distance $a$ away from the skate contact
point along the line of the blade\footnote{%
  An offset of the center of mass perpendicular to the skate blade
  does not intrinsically change the system, since a shift of the
  contact point perpendicular to the skate blade does not alter the
  constraint.%
}.

\begin{figure}[htb]
  \centering
  \parbox[b]{11cm}{%
    \centering
    \input{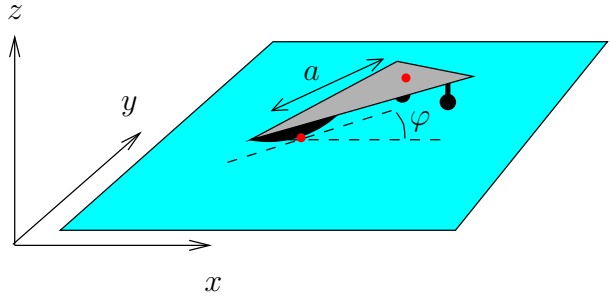}
    \caption{A Chaplygin sleigh: the left red dot is the point of
      contact of the skate, the right red dot is the center of mass.}
    \label{fig:sleigh}
  }
\end{figure}

An interesting feature of this nonholonomic system is that it exhibits
center-stable and center-unstable relative equilibria, something that
cannot occur in purely symplectic\footnote{%
  This does not hold anymore on Poisson manifolds; the Chaplygin
  sleigh can actually be realized with an adapted Poisson bracket
  $\poisson{u}{\omega} = \frac{a \omega}{I + m a^2}$ with respect to
  the moving frame coordinates introduced below.%
} Hamiltonian systems, where eigenvalues must always occur in pairs or
quadruples symmetric about the real and imaginary axes.

We shall explicitly recover the equations of motion for this system in
the Lagrangian setting\footnote{%
  See~\cite[Chap.~5]{Cushman2010:geom-nonhol-systems} for a thorough
  account how to derive the equations in various different settings,
  and the end of the chapter for some interesting notes.%
} and then illustrate in detail its realization through friction.

We describe the system with coordinates $(x,y) \in \R^2$ for the skate
contact point and $\varphi \in S^1$ for the angle of the skate blade
with the $x$ axis. Assume that the sleigh has total mass $m$ and
moment of inertia $I$ about the center of mass. To construct the
Lagrangian, we first express the center of mass point as
\begin{equation*}
  (x_c,y_c) = (x,y) + a\bigl(\cos(\varphi),\sin(\varphi)\bigr).
\end{equation*}
Then the Lagrangian is given by
\begin{equation}\label{eq:Lagr-sleigh}
  \begin{split}
    L &= \frac{1}{2}m\bigl(\dot{x}_c^2 + \dot{y}_c^2\bigr)
        +\frac{1}{2}I\dot{\varphi}^2 \\[5pt]
      &= \frac{1}{2}m\bigl(\dot{x}^2 + \dot{y}^2\bigr)
        +\frac{1}{2}\bigl(I + ma^2\bigr)\dot{\varphi}^2
        +ma\dot{\varphi}\bigl(-\dot{x}\sin(\varphi)+\dot{y}\cos(\varphi)\bigr).
  \end{split}
\end{equation}

We now switch to moving frame coordinates $(u,v,\omega)$, where $u$
and $v$ are the velocities parallel and perpendicular to the skate,
respectively, and $\omega$ is the angular velocity. They are given by
\begin{equation}\label{eq:sleigh-moving-frame}
  \begin{split}
    u &= \ns \dot{x}\cos(\varphi) + \dot{y}\sin(\varphi), \\
    v &=   - \dot{x}\sin(\varphi) + \dot{y}\cos(\varphi), \\
    \omega &= \dot{\varphi}.
  \end{split}
\end{equation}
This moving frame is aligned with the constraint
distribution $\Ddist$ in the sense that the constraint is
described by the function
\begin{equation*}
  \zeta_q(\dot{q}) := v = -\dot{x}\sin(\varphi) + \dot{y}\cos(\varphi),
\end{equation*}
i.e.\ the constraint $v = 0$ precisely expresses that the skate cannot
move sideways.

It is now a straightforward exercise to calculate the
Lagrange--d'Alembert equations~\eqref{eq:LdA-coords}. We
find for each of the coordinates $x,y,\varphi$ and their associated
velocities:
\begin{equation}\label{eq:sleigh-LdA}
  \begin{split}
    \der{}{t}\pder{L}{\dot{x}} - \pder{L}{x}
    &= m\ddot{x} + ma\bigl(  -\ddot{\varphi}\sin(\varphi) -\dot{\varphi}^2\cos(\varphi)\bigr)
     =  -\lambda \sin(\varphi), \\[5pt]
    \der{}{t}\pder{L}{\dot{y}} - \pder{L}{y}
    &= m\ddot{y} + ma\bigl(\ns\ddot{\varphi}\cos(\varphi) -\dot{\varphi}^2\sin(\varphi)\bigr)
     =\ns\lambda \cos(\varphi), \\[5pt]
    \der{}{t}\pder{L}{\dot{\varphi}} - \pder{L}{\varphi}
    &= (I + ma^2)\ddot{\varphi} \!
       \begin{aligned}[t]
       &+ma \der{}{t}   \Bigl[-\dot{x}\sin(\varphi)+\dot{y}\cos(\varphi)\Bigr] \\[3pt]
       &+ma\dot{\varphi}\bigl( \dot{x}\cos(\varphi)+\dot{y}\sin(\varphi)\bigr) = 0.
       \end{aligned}
  \end{split}
\end{equation}
Next, we switch to moving frame coordinates by substituting
in~\eqref{eq:sleigh-moving-frame} and taking linear combinations of
the first two equations with factors $\cos(\varphi)$ and
$\sin(\varphi)$. This yields
\begin{equation}\label{eq:sleigh-LdA-frame}
  \begin{split}
    m(\dot{u} - v\omega - a \omega^2)                &= 0, \\
    m(\dot{v} + u\omega + a\dot{\omega})             &= \lambda, \\
    (I + ma^2)\dot{\omega} + m a(\dot{v} + u \omega) &= 0.
  \end{split}
\end{equation}
Finally, we use the constraint $v = 0$, which implies that $\dot{v} = 0$.
Inserting this, we obtain the usual Chaplygin sleigh equations
\begin{equation}\label{eq:sleigh-equations}
  \dot{u} = a \omega^2, \qquad
  \dot{\omega} = - \frac{m a u \omega}{I + ma^2},
\end{equation}
with Lagrange multiplier $\lambda = m(u\omega + a\dot{\omega})$ giving
rise to the constraint force
\begin{equation}\label{eq:sleigh-constr-force}
  F_c = m(u\omega + a\dot{\omega})
        \begin{pmatrix} -\sin(\varphi) \\ \ns\cos(\varphi) \end{pmatrix}.
\end{equation}

\begin{figure}[htb]
  \captionsetup{margin=0.6cm}
  \centering
  \begin{minipage}[t]{7.7cm}
    \centering
    \includegraphics[width=7.7cm]{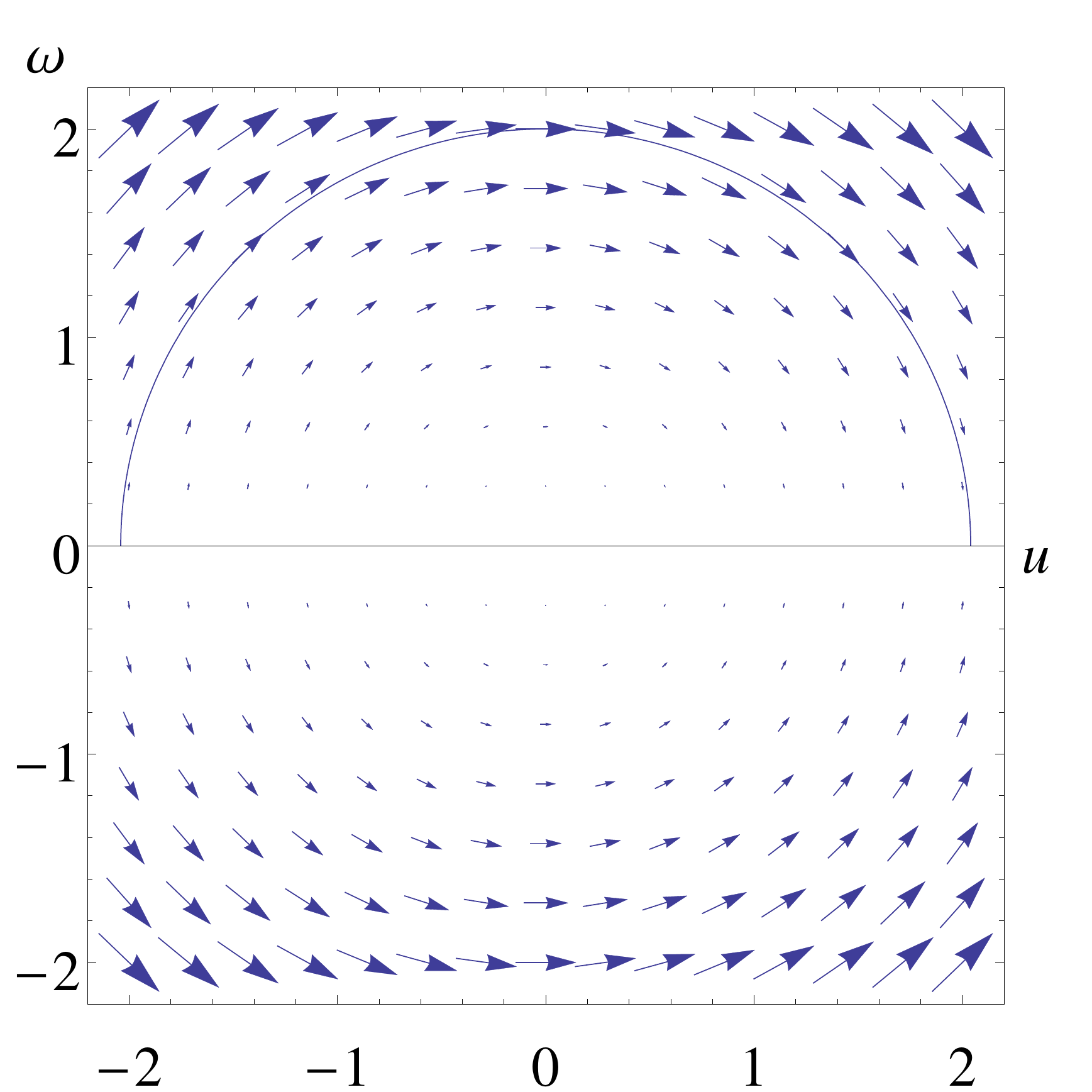}
    \caption{Phase plot of the $(u,\omega)$ coordinates with
      $m = I = 1$ and $a = 1/5$.}
    \label{fig:sleigh-phaseplot}
  \end{minipage}
  \hspace{0.2cm}
  \begin{minipage}[t]{7.7cm}
    \centering
    \includegraphics[width=7.7cm]{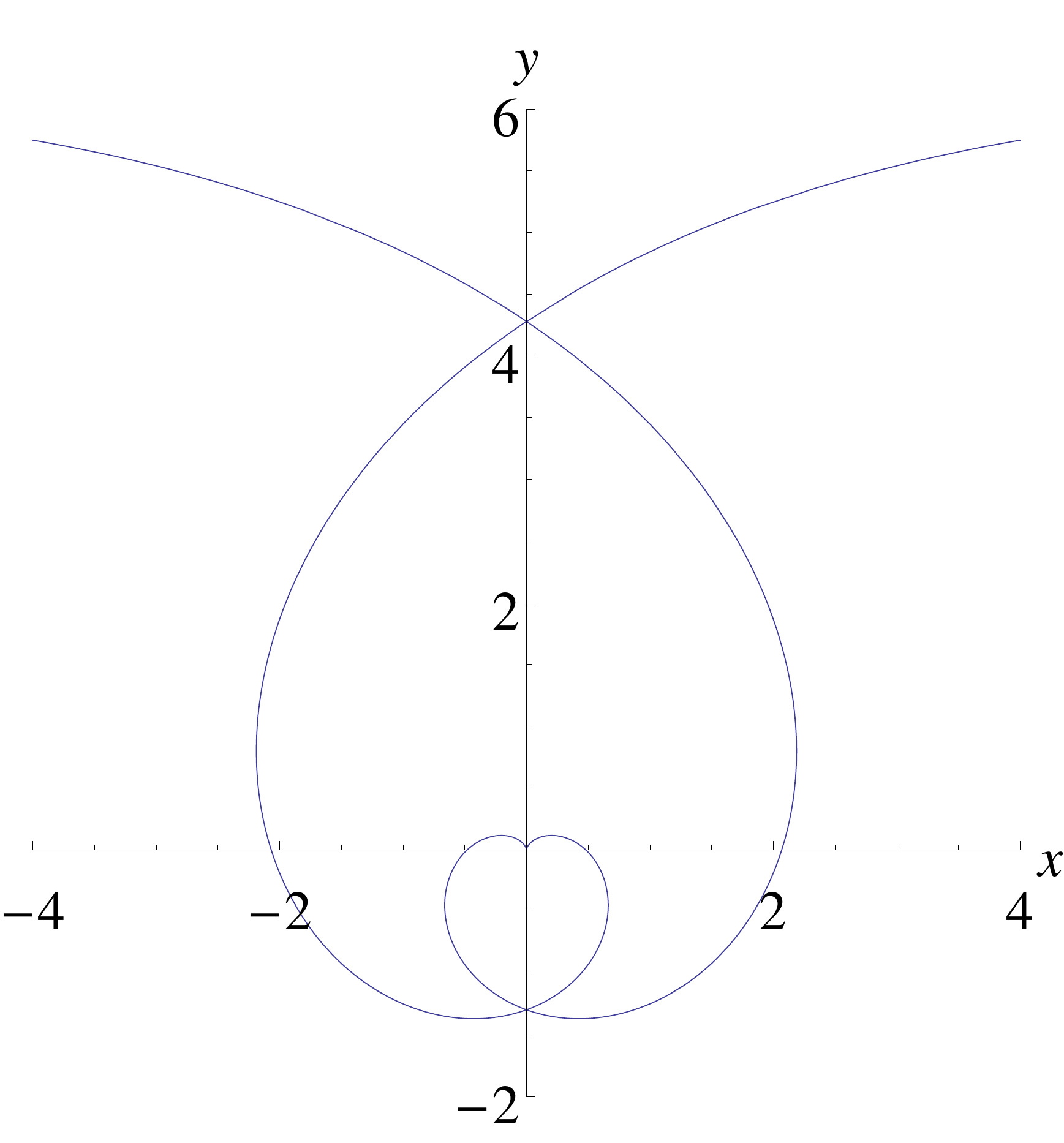}
    \caption{Trajectory plot associated to the orbit in
      Figure~\ref{fig:sleigh-phaseplot}. The sleigh approaches from
      the right.}
    \label{fig:sleigh-trajplot}
  \end{minipage}
\end{figure}

These equations give rise to the phase plot in
Figure~\ref{fig:sleigh-phaseplot}. A typical trajectory of the skate
point of contact is shown in Figure~\ref{fig:sleigh-trajplot}. We see
that the solutions are half-ellipses in the $(u,\omega)$-plane starting from the
negative $u$ axis and converging onto the positive $u$ axis. When $u$
is positive and $\omega = 0$, then the sleigh is moving in a straight
line with the center of mass forward of the skate. This turns out to
be the stable solution, while the opposite direction where $u < 0$ is
unstable.

\section{The sleigh with sliding friction}
\label{sec:sleigh-friction}

Now we shall obtain the Chaplygin sleigh
equations~\eqref{eq:sleigh-equations} in a different way. Instead of
enforcing the no-slip constraint $v = 0$ with the constraint reaction
force $F_c$, we consider the same Lagrangian without constraint, but
now we add a friction force $F_f$. Then we scale the friction force to
infinity and obtain the nonholonomic Chaplygin sleigh equations in the
limit. Note that the friction force $F_f$ is defined also outside the
constraint distribution $\Ddist$ --- and actually zero on it --- while
the reaction force $F_c$ is only defined on $\Ddist$!

This idea can be physically motivated in the following way. A
nonholonomic system with a no-slip constraint is a mathematical
idealization of a physical system where there is a (very) strong force
that prevents the system from going into slip. In this case, we assume
that a strong friction force prevents the skate from slipping
sideways. By showing that scaling the friction force to infinity leads
to nonholonomic dynamics, we provide a fundamental physics argument
for the d'Alembert principle of nonholonomic dynamics. We shall first
argue heuristically how this limit is obtained, and then make it
rigorous using geometric singular perturbation theory. Secondly, we can
consider what happens when the skate is turning too quickly at high
speed and the friction may not be able to generate the force necessary
to (nearly) keep the sleigh from slipping. Singular perturbation
theory also provides us with modifications to the nonholonomic
dynamics through a series expansion in the scaling parameter.

We start with the same Lagrangian~\eqref{eq:Lagr-sleigh}, but now insert
a friction force $F_f$ on the right-hand side. The friction force is
supposed to suppress sideways sliding of the skate blade, which is
given by the velocity $v=-\dot{x}\sin(\varphi)+\dot{y}\cos(\varphi)$.
We take the friction force linear in this slipping velocity and
pointing in the opposite direction to dampen it. Thus we
have\footnote{%
  We could have obtained this friction force more geometrically from a
  Rayleigh dissipation function
  $R(q,\dot{q}) = \frac{1}{2} \nu_q(\dot{q},\dot{q})$, where $\nu$ is
  a family of quadratic forms on $\T Q$ with kernel $\Ddist$.
  Then $F_f$ is given by minus the fiber derivative of $R$, that is,
  $F_f = -\pder{R}{\dot{q}}\colon \T Q/\Ddist \to \Ddist^0 \subset \T^* Q$.
  Using the moving frame coordinate $v$ to coordinatize
  $\T Q/\Ddist$, we find $R = \frac{1}{2} \bar{\nu}(q)\,v^2$. In our
  example we choose the friction coefficient $\bar{\nu}(q) = 1$.%
}
\begin{equation}\label{eq:sleigh-friction}
  F_f = -v \begin{pmatrix} -\sin(\varphi) \\ \ns\cos(\varphi) \end{pmatrix}.
\end{equation}
Note that this force acts at the point of contact of the skate, so
there is no associated torque around that point, i.e.\ the
$\omega$-component of the force is zero. Secondly, $F_f$ is zero when
$v = 0$ (as opposed to $F_c$), so this finite force does \emph{not}
enforce the nonholonomic constraint. We insert $F_f/\varepsilon$ into
the right-hand side of~\eqref{eq:sleigh-LdA}, so that we can scale the
friction force to infinity by taking the limit $\varepsilon \to 0$.
This yields
\begin{equation*}
  \begin{split}
    m\ddot{x} + ma\bigl(  -\ddot{\varphi}\sin(\varphi) -\dot{\varphi}^2\cos(\varphi)\bigr)
    &= \ns\frac{v}{\varepsilon} \sin(\varphi), \\[5pt]
    m\ddot{y} + ma\bigl(\ns\ddot{\varphi}\cos(\varphi) -\dot{\varphi}^2\sin(\varphi)\bigr)
    &=   -\frac{v}{\varepsilon} \cos(\varphi), \\[5pt]
    \begin{aligned}[b]
      (I + ma^2)\ddot{\varphi}
     +ma \der{}{t}   \Bigl[-\dot{x}\sin(\varphi)+\dot{y}\cos(\varphi)\Bigr]& \\[3pt]
   {}+ma\dot{\varphi}\bigl( \dot{x}\cos(\varphi)+\dot{y}\sin(\varphi)\bigr)&
   \end{aligned}
    &= 0.
  \end{split}
\end{equation*}
As in the previous section, we transform these equations to moving
frame coordinates, cf.~\eqref{eq:sleigh-LdA-frame}, and obtain
\begin{equation}\label{eq:sleight-friction-frame}
  \begin{split}
    \dot{u} - v\omega - a \omega^2    &= 0, \\[3pt]
    \dot{v} + u\omega + a\dot{\omega} &= -\frac{v}{m \varepsilon}, \\[3pt]
    (I + ma^2)\dot{\omega} + m a(\dot{v} + u \omega) &= 0,
  \end{split}
\end{equation}
but now we cannot insert the constraint condition $v = 0$.
Instead, we shall analyze the dynamics and see that when
$\varepsilon > 0$ is small, then $v(t)$ will quickly converge to near
zero. This will imply that the other two equations effectively behave
as if the constraint $v = 0$ is active. In other words, there is a
slow manifold described by the variables $(u,\omega)$ and on that
manifold the fast variable $v$ is small and slaved to $(u,\omega)$.
Secondly, the friction force naturally takes values in
$\Ddist^0$, the annihilator of $\Ddist$. This is preserved
in the limit to realize a constraint reaction force according to
d'Alembert's principle~\ref{hyp:dAlembert}. All of this indicates that
we can expect to recover nonholonomic dynamics.

To obtain a differential equation for $\dot{v}$, we subtract
$\frac{a}{I+ma^2}$ times the third equation from the second to make
the term with $\dot{\omega}$ cancel. This gives
\begin{equation}\label{eq:sleigh-fast-eq}
  \frac{I}{I + ma^2}(\dot{v} + u\omega) = -\frac{v}{m \varepsilon}.
\end{equation}
When $\varepsilon$ is small, this gives fast exponential
decay of $v$. We shall now assume that the typical rate at which
$v(t)$ changes is much faster than that of $u(t)$ and $\omega(t)$.
That is, we consider $v$ as fast variable and $u,\omega$ as slow
variables. Further conclusions based on this can be made rigorous
using singular perturbation theory, but we postpone these arguments to
the general theory in Section~\ref{sec:general} and focus on obtaining
the result.

Thus, we assume in~\eqref{eq:sleigh-fast-eq} that $u,\omega$ are
approximately constant and obtain as solution
\begin{equation}\label{eq:sleigh-fast-sol}
  v(t) = \Bigl(v(0) + \frac{u \omega}{\rho}\Bigr) e^{-\rho t} - \frac{u \omega}{\rho}
  \qquad\text{with } \rho = \frac{I + ma^2}{I m \varepsilon}.
\end{equation}
This quickly settles to $v = - \frac{u \omega}{\rho}$, so for
the slow dynamics of $u,\omega$ we insert this relation, which gives
\begin{equation}\label{eq:sleigh-slow-eq}
  \dot{u} = a \omega^2 + \varepsilon\frac{I m}{I + ma^2} u \omega^2, \qquad
  \dot{\omega} = -\frac{m a u \omega}{I + ma^2}
                 +\varepsilon I m^2 a \der{}{t}\bigl[u \omega\bigr].
\end{equation}
In these differential equations $\varepsilon$ only appears in the
numerator and the $\varepsilon$ multiplying the time derivative of
$(u \omega)$ does not lead to singular equations, so we can sensibly
take the limit: we simply insert $\varepsilon = 0$ to arrive at the
original equations~\eqref{eq:sleigh-equations} for the nonholonomically
constrained Chaplygin sleigh. This is also confirmed by numerical
integration of this system: with decreasing values of $\varepsilon$,
the trajectories converge to the trajectory of the nonholonomic
system, see Figure~\ref{fig:sleigh-frictrajplots}. It is clearly
visible though that the convergence is not uniform in time, as already
illustrated in Section~\ref{sec:diff-constraints}. In
Section~\ref{sec:sleigh-expand} we will analyze the perturbation to
the slow system in more detail.

\begin{figure}[htb]
  \centering
  \includegraphics[width=9cm]{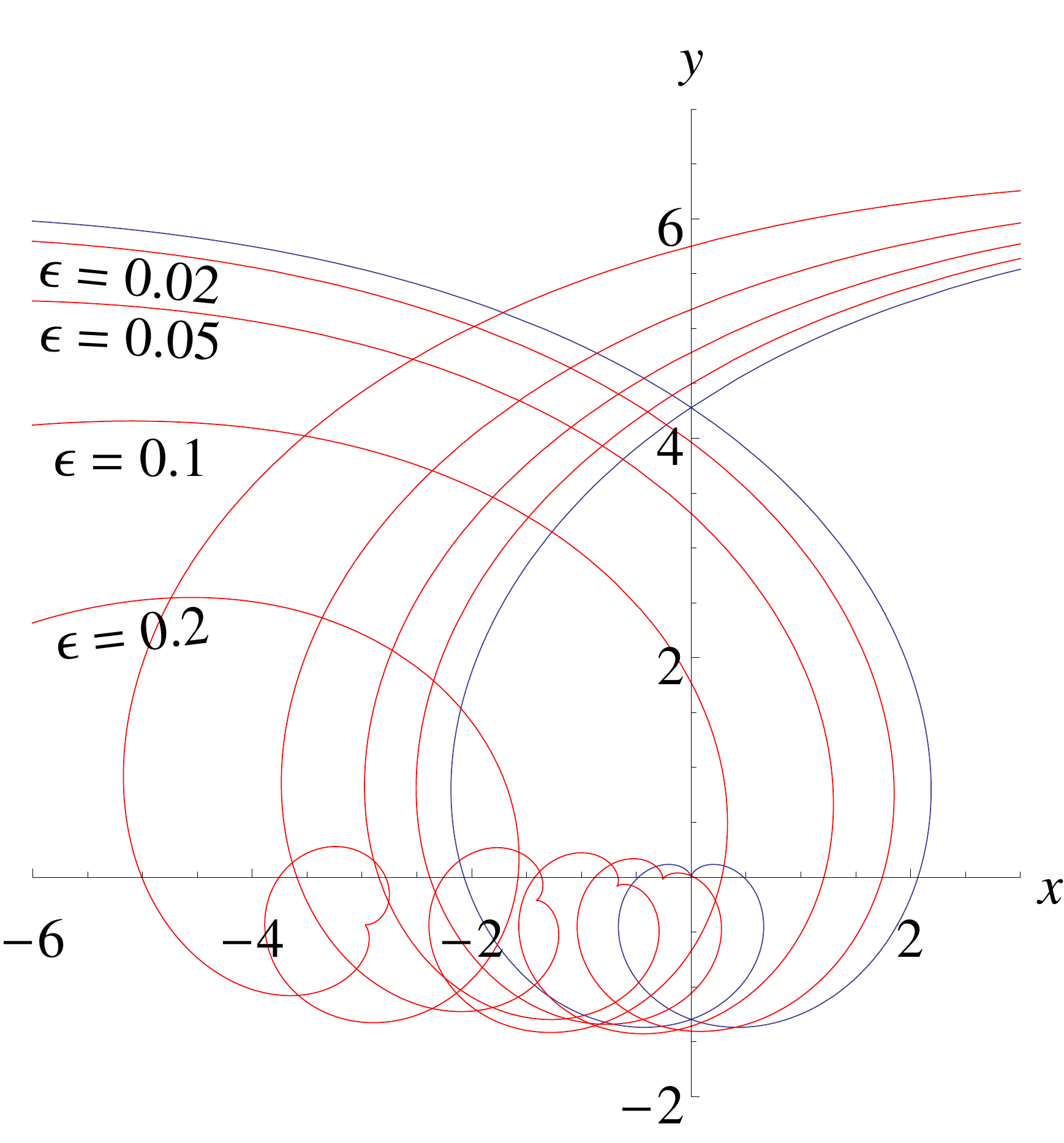}
  \caption{The red orbits are trajectories of the sleigh with friction
    with indicated parameter values of $\varepsilon$. These clearly
    converge to the nonholonomic trajectory in blue.}
  \label{fig:sleigh-frictrajplots}
\end{figure}

\section{The general theory}
\label{sec:general}

We shall finally prove the statement that linear friction forces
realize nonholonomic dynamics in a more general setting. This
rephrases the results in~\cite{Brendelev1981:realnonhol,
  Karapetian1981:realnonhol} in a more geometric formulation, and
specifies in what sense the solution curves converge, also on
positively unbounded time intervals.

Recall that we consider Lagrangian systems of mechanical type,
\begin{equation}\tag{\ref{eq:Lagrangian}}
  L(q,\dot{q}) = \frac{1}{2}\kappa_q(\dot{q},\dot{q}) - V(q),
  \qquad (q,\dot{q}) \in \T Q.
\end{equation}
Furthermore, to state our result, we need the concept of a
pseudo solution (or pseudo orbit) of a dynamical system.
\begin{definition}[Pseudo solution]\label{def:pesudo-sol}
  Let $(M,g)$ be a smooth Riemannian manifold and let
  $X \in \mathfrak{X}(M)$ be a $C^1$ vector field on it. We say that a
  $C^1$ curve $x(t) \in M$ is a $\delta$-pseudo solution of $X$ when
  \begin{equation}\label{eq:pseudo-sol}
    \norm{\dot{x}(t) - X(x(t))} \le \delta \qquad\text{for all $t$.}
  \end{equation}
\end{definition}
Furthermore, we consider linear friction forces $F$ that are specified
by a Rayleigh function $R$. That is, given a smooth family of positive
semi-definite bilinear forms
$\nu_q\colon \T_q Q \times \T_q Q \to \R$, we define the Rayleigh
function $R(q,\dot{q}) = \frac{1}{2}\nu_q(\dot{q},\dot{q})$ and the
friction force as its negative fiber derivative,
\begin{equation}\label{eq:Rayleigh-friction}
  F\colon \T Q \to \T^* Q,\quad
  F(q,\dot{q}) = -\pder{R}{\dot{q}}(q,\dot{q}) = -\nu_q^\flat(\dot{q}).
\end{equation}

Then we have the following result\footnote{%
  We shall assume that everything has sufficient smoothness, say,
  $C^r$ for a large $r$. Note that the perturbed manifolds
  $\Ddist_\varepsilon$ will generally only have finite degree of
  smoothness, see e.g.~\cite[Sect.~1.2.1]{Eldering2013:NHIM-noncompact}.%
}
\begin{theorem}\label{thm:friction-limit}
  Let $Q$ be a compact manifold, let $L$ be a Lagrangian of mechanical
  type~\eqref{eq:Lagrangian}, and let $F$ be a linear friction force
  with kernel $\ker(F) = \Ddist$ a regular distribution. Let
  $X_\varepsilon \in \mathfrak{X}(\T Q)$ denote the vector field of
  the Lagrangian system with Rayleigh friction force $F/\varepsilon$
  as above, and let $X_\textup{NH} \in \mathfrak{X}(\Ddist)$ denote
  the vector field of the Lagrange--d'Alembert nonholonomic system
  $(Q,L,\Ddist)$.

  Then solution curves of $X_\varepsilon$ converge to solutions of
  $X_\textup{NH}$ as $\varepsilon \to 0$ in the following sense. Fix
  an energy level $\bar{E} > \inf V$ and consider initial
  conditions $x_0 \in \T Q$ with energy less than $\bar{E}$. Then for
  all $\varepsilon > 0$ sufficiently small we have the following
  results.
  \begin{enumerate}
  \item All solution curves $x_\varepsilon(t)$ of $X_\varepsilon$
    converge at a uniform exponential rate to an invariant
    manifold\footnote{%
      The manifold $\Ddist_\varepsilon$ is unique up to the choice of
      a cutoff function at the boundary of the energy $\bar{E}$ in
      $\T Q$. Different choices lead to differences of order
      $\varepsilon$ that decay exponentially away from the boundary.
      However, the limit to the reduced vector field $X_\textup{NH}$
      will not depend on this choice.%
    } $\Ddist_\varepsilon$ that is $C^1$-close and diffeomorphic to
    $\Ddist$. The manifold $\Ddist_\varepsilon$ depends smoothly on
    $\varepsilon$ near zero, with $\Ddist_0 = \Ddist$.

  \item There exists a family of curves
    $\bar{x}_\varepsilon(t) \in \Ddist$ that are
    $\mathcal{O}(\varepsilon)$-pseudo solution curves of
    $X_\textup{NH}$, such that\footnote{%
      Here $d$ is some appropriate distance on $\T Q$, for example the
      distance induced by the Sasaki metric coming from $(Q,\kappa)$.%
    }
  \begin{equation}\label{eq:conv-to-pseudo-sols}
    \forall t_1 > 0 \quad
    \sup_{t \in [t_1,\infty)} d\bigl(x_\varepsilon(t),\bar{x}_\varepsilon(t)\bigr) \to 0
  \end{equation}
  as $\varepsilon \to 0$, uniformly in $x_0$.
  \end{enumerate}
\end{theorem}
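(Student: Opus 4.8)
The plan is to recast the Lagrangian-with-friction system as a singularly perturbed fast–slow system and then invoke Fenichel's geometric singular perturbation theory. First I would fix the energy level $\bar{E}$ and restrict attention to the compact, invariant (up to the energy boundary) region $\{E \le \bar{E}\}$ in $\T Q$; because $F/\varepsilon$ is dissipative, the energy is nonincreasing, so for initial data below $\bar{E}$ the solutions stay in this set for all positive time. On a collar near the boundary I would multiply the friction term by a cutoff so that everything takes place on a fixed compact manifold (with boundary), which is exactly where the \NHIM machinery of Fenichel and the noncompact refinements in~\cite{Eldering2013:NHIM-noncompact} apply; the footnote in the statement already flags that this cutoff only affects things by $\mathcal{O}(\varepsilon)$ exponentially away from the boundary.

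**Setting up the fast–slow splitting.** The key structural step is to split $\T Q$ along $\Ddist$: writing $\T Q = \Ddist \oplus \Ddist^\perp$ (orthogonal with respect to $\kappa$), the friction force $F = -\nu^\flat$ vanishes on $\Ddist$ and is negative-definite on the $\Ddist^\perp$ directions, so in the second-order equations the velocity components transverse to $\Ddist$ are acted on by a term of size $1/\varepsilon$ with strictly negative real part (this is the abstract version of~\eqref{eq:sleigh-fast-eq}). Rescaling time by $\tau = t/\varepsilon$, or equivalently writing $X_\varepsilon = \tfrac1\varepsilon X_{\textup{fast}} + X_{\textup{slow}} + \mathcal{O}(\varepsilon)$, the fast subsystem has, for frozen slow variables, a normally hyperbolic (in fact attracting) critical manifold: precisely the set where the transverse velocities equal their friction-balanced values. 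At $\varepsilon = 0$ this critical manifold is the zero section of $\Ddist^\perp$ over $\Ddist$, i.e.\ $\Ddist$ itself, and the reduced (slow) flow on it is obtained by substituting the constraint — this reproduces the Lagrange--d'Alembert vector field $X_{\textup{NH}}$, which is the geometric content of the passage from~\eqref{eq:sleight-friction-frame} to~\eqref{eq:sleigh-equations}. The eigenvalue estimate making $\Ddist$ attracting and normally hyperbolic uniformly (with rate $\sim c/\varepsilon$ in original time) comes from positive semi-definiteness of $\nu$ together with $\ker \nu = \Ddist$ and compactness.

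**Applying Fenichel/\NHIM theory.** With the critical manifold $\Ddist$ compact, normally hyperbolic and attracting, Fenichel's theorem gives, for all small $\varepsilon>0$, a locally invariant slow manifold $\Ddist_\varepsilon$ that is $C^1$-$\mathcal{O}(\varepsilon)$-close and diffeomorphic to $\Ddist$, depending smoothly on $\varepsilon$ with $\Ddist_0 = \Ddist$ — this is part (i). Attractivity with uniform exponential rate gives that every solution $x_\varepsilon(t)$ with energy below $\bar{E}$ converges to $\Ddist_\varepsilon$ at rate $\sim e^{-ct/\varepsilon}$; this is the uniform-exponential-rate claim and also the mechanism behind the uniformity in $x_0$. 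For part (ii): transport $x_\varepsilon(t)$ along the stable fibration to its footpoint on $\Ddist_\varepsilon$, then push that footpoint back to $\Ddist$ via the $\mathcal{O}(\varepsilon)$-diffeomorphism $\Ddist_\varepsilon \cong \Ddist$; call the result $\bar{x}_\varepsilon(t)$. Because $\Ddist_\varepsilon$ is invariant for $X_\varepsilon$, the curve on $\Ddist_\varepsilon$ exactly solves the restricted flow, which is an $\mathcal{O}(\varepsilon)$-perturbation of $X_{\textup{NH}}$ (cf.~\eqref{eq:sleigh-slow-eq}); projecting to $\Ddist$ introduces a further $\mathcal{O}(\varepsilon)$ error in the vector field, so $\bar{x}_\varepsilon$ is an $\mathcal{O}(\varepsilon)$-pseudo solution of $X_{\textup{NH}}$ in the sense of Definition~\ref{def:pesudo-sol}. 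Finally, $d(x_\varepsilon(t),\bar{x}_\varepsilon(t))$ is bounded by the sum of the exponential approach to $\Ddist_\varepsilon$ (which, for $t \ge t_1 > 0$ fixed, is $\le C e^{-ct_1/\varepsilon} \to 0$ uniformly in $x_0$) and the $\mathcal{O}(\varepsilon)$ fiber-transport distance, giving~\eqref{eq:conv-to-pseudo-sols}.

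**Main obstacle.** The routine part is the eigenvalue/normal-hyperbolicity bookkeeping; the genuinely delicate point is handling the energy boundary cleanly. The region $\{E \le \bar{E}\}$ is invariant in forward time but is a manifold with boundary, and Fenichel's classical statement wants either compactness without boundary or overflowing invariance. The fix — introduce a cutoff of the friction near $\partial\{E=\bar{E}\}$, apply the noncompact \NHIM results of~\cite{Eldering2013:NHIM-noncompact}, and then argue that trajectories starting strictly below $\bar{E}$ never see the modification so the cutoff does not affect the stated conclusions — needs to be done carefully to make the claims (and the "uniformly in $x_0$") genuinely hold; this is where I would spend the most effort. A secondary subtlety is making precise that the reduced flow on the critical manifold is \emph{exactly} $X_{\textup{NH}}$ rather than merely conjugate to it, which requires checking that the $\kappa$-orthogonal projection of the Euler--Lagrange vector field onto $\Ddist$, after killing the transverse velocities, coincides with the Lagrange--d'Alembert equations~\eqref{eq:LdA-equations} — a coordinate-free version of the computation in Section~\ref{sec:sleigh-friction}.
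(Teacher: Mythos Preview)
Your proposal is correct and follows essentially the same route as the paper: rescale to fast time $\tau = t/\varepsilon$, identify $\Ddist$ as a normally hyperbolic attracting critical manifold for the limiting vector field via the spectrum of $-\kappa^\sharp\nu^\flat|_{\Ddist^\perp}$, restrict to the forward-invariant compact energy sublevel with a cutoff at the boundary, apply Fenichel/\NHIM persistence to obtain $\Ddist_\varepsilon$, and for part~(ii) project along the stable fibration to $\Ddist_\varepsilon$ and then via $\pr$ to $\Ddist$. The only minor discrepancy is that the paper's cutoff is a modification of $Y_\varepsilon$ outside $\mathcal{E}$ to make $\Ddist\cap\mathcal{E}$ overflowing invariant in Fenichel's sense (rather than invoking the noncompact \NHIM theory, which is reserved for the remark on relaxing compactness of $Q$), and the paper carries out your ``secondary subtlety'' explicitly by writing $Y_\varepsilon$ in an adapted orthogonal frame with connection one-form and reading off that $\tilde{X}_0 = X_{\textup{NH}}$.
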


The compactness condition of $Q$ can be replaced by uniformity
assumptions on all of $Q,L,F,\Ddist$. One crucial assumption that has
to be added is that $V$ is bounded from below. Moreover, one should
require $(Q,\kappa)$ to have bounded geometry and $\d V$, $F$, and
$\Ddist$ to be uniformly $C^1$ bounded and $F$ also bounded away from
zero. This is needed if one wants to apply normal hyperbolicity theory
in a noncompact context,
see~\cite[Thm.~3.1]{Eldering2013:NHIM-noncompact}. In the Chaplygin
sleigh example $Q = \R^2 \times S^1$ is not compact, but this
generalization does apply: the system is symmetric under the Euclidean
group $\mathrm{SE}(\R^2)$ acting transitively on $Q$, hence
$(Q,\kappa)$ has bounded geometry
(see~\cite[Ex.~2.3]{Eldering2013:NHIM-noncompact}) and the remaining
uniformity conditions are fulfilled as well. Another viewpoint is to
compactify $\R^2$ to the two-torus, or to consider the closely related
Suslov problem of a nonholonomically constrained rigid body, so that
Theorem~\ref{thm:friction-limit} does apply.

Alternatively, one
can consider convergence of solution curves only on finite time
intervals, which effectively allows one to restrict to a compact set
covering the nonholonomic solution curve. This is essentially the
setting of the results in~\cite{Brendelev1981:realnonhol,
  Karapetian1981:realnonhol}, which are given in local coordinates. It
does not automatically yield uniformity of convergence of solution
curves (even on finite time intervals) when $Q$ is noncompact though,
as illustrated by the following example.

Let $Q = \R^2$ with the Euclidean metric and consider a particle of
unit mass under the potential $V(x,y) = -x^2$. Let the constraint
distribution be
\begin{equation*}
  \Ddist = \operatorname{span} \chi(x-y)\partial_x + \bigl(1-\chi(x-y)\bigr)\partial_y,
\end{equation*}
where $\chi \in C^\infty(\R;[0,1])$
such that $\chi(x) = 0$ for $x \le 0$ and $\chi(x) = 1$ for $x \ge 1$.
Finally, let the friction force $F$ be arbitrary, but with kernel
$\Ddist$ and uniformly bounded.
Now consider the unconstrained system with friction and initial
conditions $x(0) > 0$ small, $y(0) \ge 1$ arbitrarily large and
$\dot{x}(0) = \dot{y}(0) = 0$. Then the particle will first accelerate
in the positive $x$ direction due to the potential; once it hits the
line $x = y$, the friction force kicks in to make the particle follow
the changing constraint direction of $\Ddist$. However, for any fixed
$\varepsilon > 0$, the particle's velocity component $\dot{x}$ will
overshoot the constraint $\Ddist = \operatorname{span} \partial_y$ in
the region $x \ge y + 1$ by an arbitrary amount as the initial
condition $y(0)$ is made large, since $\dot{x}$ then is arbitrary
large at `impact' with the changing constraint. Even worse, since the
potential force increases with $x$, even in the region $x \ge y + 1$,
the velocity component $\dot{x}$ will diverge away from zero, that is,
from $\Ddist$.

\begin{proof}[Proof of Theorem~\ref{thm:friction-limit}]
The system with friction described by $X_\varepsilon$ is given by the
second order equations
\begin{equation}\label{eq:EL-friction}
  \kappa^\flat(\nabla_{\dot{q}} \dot{q})
  = -\d V(q) + \frac{1}{\varepsilon} F(q,\dot{q}),
\end{equation}
or, in explicit local coordinates it is expressed as
\begin{equation}\label{eq:EL-friction-coords}
  \dot{q}^i = v^i, \qquad
  \dot{v}^i = -\Gamma^i_{jk} v^j v^k
             + \kappa^{ij}\Bigl(-\pder{V}{q^j} + \frac{1}{\varepsilon}F_j\Bigr).
\end{equation}
The limit of the vector field $X_\varepsilon$ is singular, so we
switch to a rescaled version
$Y_\varepsilon = \varepsilon\,X_\varepsilon$. This can be viewed as
changing to a fast time variable $\tau = t/\varepsilon$ (whose
derivative we denote by a prime) for any $\varepsilon > 0$, but the
vector field $Y_0$ is also well-defined and given in local coordinates
as
\begin{equation}\label{eq:Y0}
  q'^i = 0, \qquad
  v'^i = \kappa^{ij} F_j = -\kappa^{ij}\,\nu_{jk}\,v^k.
\end{equation}
Both $\kappa^{ij}$ and $\nu_{jk}$ are (semi-)positive definite
matrices with $\ker(\nu_q) = \Ddist_q$. This shows that the
smooth manifold $\Ddist \subset \T Q$ is the fixed point set of
$Y_0$ and that $Y_0$ is linear in the normal direction, and the
linear term given by
\begin{equation*}
  -\kappa^\sharp \circ \nu^\flat\big|_{\T Q / \Ddist}
\end{equation*}
has strictly negative eigenvalues when we identify
$\T Q / \Ddist \cong \Ddist^\perp$. Thus, $\Ddist$ is a normally
hyperbolic invariant manifold (\NHIM) for $Y_0$ and it is globally
exponentially attractive.

We now restrict attention to the compact subset
\begin{equation*}
  \mathcal{E} = \{ (q,v) \in \T Q \mid E(q,v) \le \bar{E} \}
\end{equation*}
below the energy level $\bar{E}$, which is nonempty when
$\bar{E} > \inf V$. The dynamics of $X_\varepsilon$ (and thus also of
$Y_\varepsilon$) is dissipative, hence leaves $\mathcal{E}$ forward
invariant. This implies that we can use a cutoff argument\footnote{%
  Without going into full details: we modify $Y_\varepsilon$ at the
  boundary of $\mathcal{E}$ such that $\Ddist \cap \mathcal{E}$
  is overflowing invariant (see~\cite{Fenichel1971:invarmflds}) for
  the modified vector field.%
} outside $\mathcal{E}$ which will not affect forward solutions with
initial conditions inside $\mathcal{E}$. Using persistence of \NHIM{}s
(see~\cite{Fenichel1971:invarmflds, Hirsch1977:invarmflds,
  Eldering2013:NHIM-noncompact}) we find
that for $\varepsilon > 0$ sufficiently small, there is a unique
perturbed manifold $\Ddist_\varepsilon$ that is $C^1$-close and
diffeomorphic to $\Ddist$ and invariant under $Y_\varepsilon$.
Moreover, $\Ddist_\varepsilon$ depends smoothly on
$\varepsilon$ and is a \NHIM again with perturbed stable fibration. In
particular, $\Ddist_\varepsilon$ is locally exponentially
attractive, while the exponential attraction globally inside
$\mathcal{E}$ follows from uniform smooth dependence of flows on
parameters over compact domains and time intervals, see
e.g.~\cite[Thm.~A.6]{Eldering2013:NHIM-noncompact}. Hence solution
curves $\tilde{x}_\varepsilon(\tau)$ of $Y_\varepsilon$ converge at a
uniform exponential rate to $\Ddist_\varepsilon$, and thus the
solution curves $x_\varepsilon(t) = \tilde{x}_\varepsilon(t/\varepsilon)$
of $X_\varepsilon$ do so too, and at increasing rates as
$\varepsilon \to 0$. This proves the first part of the claims.

To prove the remaining claims, we need to analyze the dynamics of
$Y_\varepsilon$ restricted to the invariant manifold
$\Ddist_\varepsilon$. Let us denote by
\begin{equation}\label{eq:proj-D}
  \pr\colon \T Q \to \Ddist
\end{equation}
the projection along $\Ddist^\perp$, that is, orthogonal
according to the kinetic energy metric $\kappa$. Note
that~\eqref{eq:proj-D} can be viewed as a vector bundle where we
forget about the linear structure on $\Ddist$. The
Lagrange--d'Alembert equations~\eqref{eq:LdA-equations} can then be
expressed as
\begin{equation*}
  \pr \circ [L](\ddot{q}(t)) = 0
  \qquad\text{with } \dot{q}(t) \in \Ddist.
\end{equation*}
Furthermore, let $f\colon U \times \R^n \to \T Q$ be a frame on an
open set $U \subset Q$, such that the first $k$ components span
$\Ddist$ and the remaining $n\!-\!k$ span $\Ddist^\perp$. Let
$\omega \in \Omega^1\bigl(Q;\textup{End}(\R^n)\bigr)$ denote the
connection one-form with respect to the frame $f$ and let
$(\xi,\eta) \in \R^n$ denote coordinates associated to this
frame, where $\xi \in \R^k$ and $\eta \in \R^{n-k}$ respectively
coordinatize the fibers of $\Ddist$ and $\Ddist^\perp$. In these frame
coordinates, the projection $\pr$ simply maps $(\xi,\eta)$
to $(\xi,0)$. Let $\pr_\xi\colon \R^n \to \R^k$ denote this
projection, $\pr_\eta\colon \R^n \to \R^{n-k}$ its complementary
projection. See Appendix~\ref{sec:conn-form-struc-func} for a brief
overview of using a connection one-form to express equations of motion
in a moving frame and its relation to using structure functions.

In frame coordinates the Lagrange--d'Alembert equations are then
\begin{equation}\label{eq:LdA-frame}
  X_\textup{NH} \Longleftrightarrow \left\{
  \begin{aligned}
    \dot{q}   &= f_q \cdot (\xi,0),\\
    \dot{\xi} &= \pr_\xi \Bigl[ -\omega(\dot{q}) \cdot (\xi,0)
                                -f_q^{-1} \cdot \kappa_q^\sharp \cdot \d V\Bigr],
  \end{aligned}\right.
\end{equation}
since $\dot{q} \in \Ddist_q$ is equivalent to $\eta = 0$. This is
the vector field $X_\textup{NH}$ on $\Ddist$.

Next, we show that there is a well-defined limit of the family of
vector fields
\begin{equation*}
  X_\varepsilon|_{\Ddist_\varepsilon} \in \mathfrak{X}(\Ddist_\varepsilon)
\end{equation*}
and that the limit is precisely $X_\textup{NH}$. To make sense of this
limit, however, we first have to re-express these vector fields on a
fixed manifold, $\Ddist$. This we do using the projection
$\pr\colon \T Q \to \Ddist$. The invariant manifolds are $C^1$-close
and diffeomorphic to $\Ddist$, so they can be written as the graphs of
functions $h_\varepsilon\colon \Ddist \to \Ddist^\perp$ covering the
identity in $Q$, or in other words as graphs of sections
$h_\varepsilon \in \Gamma(\pr\colon \T Q \to \Ddist)$. That is, in
frame coordinates we have
\begin{equation}\label{eq:Deps-graph}
  \Ddist_\epsilon = \{\, \eta = h_\varepsilon(q,\xi) \,\}.
\end{equation}
Since $\Ddist_\varepsilon$ depends smoothly on $\varepsilon$, we
can consider a Taylor expansion of $h_\varepsilon$. This we denote as
\begin{equation}\label{eq:graph-expansion}
  h_\varepsilon(q,\xi) = \sum_{i=0}^r \frac{\varepsilon^i}{i!}\,h^{(i)}(q,\xi)
                         + \mathcal{O}(\varepsilon^{r+1}).
\end{equation}
Note that $h^{(0)}(q,\xi) = h_0(q,\xi) = 0$ since $\Ddist_0 = \Ddist$.
We now consider $Y_\varepsilon$ in frame coordinates,
cf.~\eqref{eq:EL-friction} and~\eqref{eq:LdA-frame}. Let us denote by
$\bigl(\kappa_q^\sharp\,\nu_q^\flat\bigr)_{\!f}$ the linear operator
$\kappa_q^\sharp\,\nu_q^\flat\colon \Ddist_q^\perp \to \Ddist_q^\perp$
with respect to the frame $f$, then
\begin{equation}\label{eq:EL-friction-frame}
  Y_\varepsilon \Longleftrightarrow \left\{
  \begin{aligned}
    q'    &= \varepsilon\, f_q \cdot (\xi,\eta),\\
    \xi'  &= \varepsilon\,\pr_\xi\, \Bigl[
                -\omega\bigl(f_q \cdot (\xi,\eta)\bigr) \cdot (\xi,\eta)
                -f_q^{-1} \cdot \kappa_q^\sharp \cdot \d V \Bigr],\\
    \eta' &= \varepsilon\,\pr_\eta \Bigl[
                -\omega\bigl(f_q \cdot (\xi,\eta)\bigr) \cdot (\xi,\eta)
                -f_q^{-1} \cdot \kappa_q^\sharp \cdot \d V \Bigr]
             -\bigl(\kappa_q^\sharp\,\nu_q^\flat\bigr)_{\!f} \cdot \eta.
  \end{aligned}\right.
\end{equation}
Note that $\omega(f_q \cdot \;)$ are the connection coefficients  as
defined in Appendix~\ref{sec:conn-form-struc-func}, but written
without indices. Since $Y_\varepsilon$ leaves $\Ddist_\varepsilon$
invariant, we can consider the restriction
$Y_\varepsilon|_{\Ddist_\varepsilon}$. Secondly,
$\pr\colon \Ddist_\varepsilon \to \Ddist$ is a
diffeomorphism (its inverse is the map
$\textup{Id}_\Ddist + h_\varepsilon$), so we can push forward the
vector field $Y_\varepsilon|_{\Ddist_\varepsilon}$ along $\pr$ to
\begin{equation*}
  \tilde{Y}_\varepsilon
  := \pr_*\bigl(Y_\varepsilon|_{\Ddist_\varepsilon}\bigr)
  \in \mathfrak{X}(\Ddist).
\end{equation*}
In frame coordinates this amounts to projecting the vector field onto
the coordinates $(q,\xi)$, while inserting
$\eta = h_\varepsilon(q,\xi)$, see Figure~\ref{fig:invar-mflds}. That
is, in frame coordinates we have
\begin{equation}\label{eq:EL-friction-invar}
  \tilde{Y}_\varepsilon \Longleftrightarrow \left\{
  \begin{aligned}
    q'   &= \varepsilon\, f_q \cdot (\xi,h_\varepsilon(q,\xi)),\\
    \xi' &= \varepsilon\,\pr_\xi\, \Bigl[
               -\omega\bigl(f_q \cdot (\xi,h_\varepsilon(q,\xi))\bigr) \cdot (\xi,h_\varepsilon(q,\xi))
               -f_q^{-1} \cdot \kappa_q^\sharp \cdot \d V \Bigr].
  \end{aligned}\right.
\end{equation}
\begin{figure}[htb]
  \centering
  \input{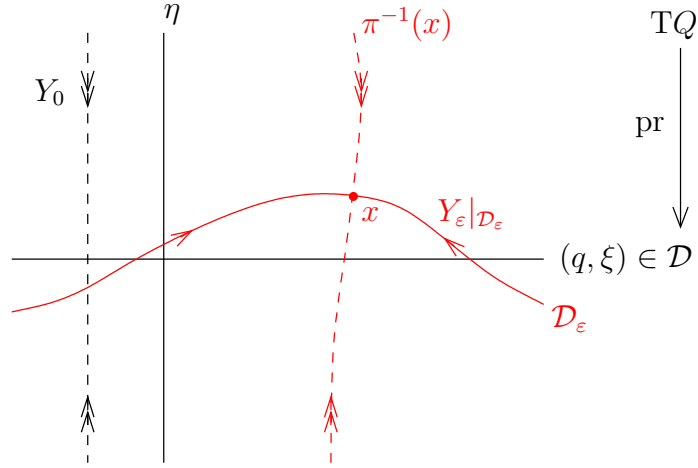}
  \caption{The invariant manifolds $\Ddist_\varepsilon$ and vector
    fields living on them in frame coordinates $(q,\xi,\eta)$. The
    dashed lines indicate stable fibers.}
  \label{fig:invar-mflds}
\end{figure}

Now we note that $\tilde{Y}_\varepsilon$ is of order $\varepsilon$, so
this vector field can be rescaled back to the original slow time
$t = \varepsilon\,\tau$, which we denote as
$\tilde{X}_\varepsilon = \frac{1}{\varepsilon}\tilde{Y}_\varepsilon$.
Then we can consider its limit
\begin{equation}\label{eq:X-friction-limit}
  \tilde{X}_0
  := \lim_{\varepsilon \to 0} \tilde{X}_\varepsilon \in \mathfrak{X}(\Ddist)
\end{equation}
and we find $\tilde{X}_0 = X_\textup{NH}$, that is, the
Lagrange--d'Alembert equations: we have
$h_\varepsilon(q,\xi) = \varepsilon\,h^{(1)}(q,\xi) + \mathcal{O}(\varepsilon^2)$,
so after dividing~\eqref{eq:EL-friction-invar} by $\varepsilon$ and
inserting $\eta = h_\varepsilon(q,\xi)$, the only terms remaining in
the limit are those with $\eta = 0$. This yields exactly the
nonholonomic vector field $X_\textup{NH}$ as in~\eqref{eq:LdA-frame}.

Using this limit, we can finally prove the last dynamical statement
that solution curves $x_\varepsilon(t)$ of $X_\varepsilon$ converge to
curves $\bar{x}_\varepsilon(t) \in \Ddist$ that are uniformly
$\mathcal{O}(\varepsilon)$-pseudo solutions of $X_\textup{NH}$. Using
persistence of normal hyperbolicity we already proved that
$x_\varepsilon(t)$ converges to the invariant manifold
$\Ddist_\varepsilon$ uniformly in $x_0$. Moreover,
$\Ddist_\varepsilon$ has a stable fibration whose projection commutes
with the flow of $X_\varepsilon$. That is, there exists a (nonlinear)
fibration $\pi\colon \T Q \to \Ddist_\varepsilon$ such that
$\pi(\Phi_\varepsilon^t(x_0)) = \Phi_\varepsilon^t(\pi(x_0))$. Each
single fiber $\pi^{-1}(x)$ is not invariant, but the flow does map
fibers onto fibers and is exponentially contracting along them. Hence,
$x_\varepsilon(t)$ projects to a solution curve
$\pi(x_\varepsilon(t)) \in \Ddist_\varepsilon$ to which it converges
as $t \to \infty$. Now define
$\bar{x}_\varepsilon(t) := \pr(\pi(x_\varepsilon(t))) \in \Ddist$,
which is by construction a solution of $\tilde{X}_\varepsilon$. Since
$\tilde{X}_\varepsilon - X_\textup{NH} \in \mathcal{O}(\varepsilon)$
and $\norm{h_\varepsilon}_\textrm{sup} \in \mathcal{O}(\varepsilon)$
uniformly on the compact set $\mathcal{E}$, it follows that
$\bar{x}_\varepsilon(t)$ is a $\mathcal{O}(\varepsilon)$-pseudo
solution of $X_\textup{NH}$ and that $\pi(x_\varepsilon)$ and
$\bar{x}_\varepsilon$ are
$\mathcal{O}(\varepsilon)$-close uniformly in time. Finally, since the
fast-time-$\tau$ solutions $\tilde{x}_\varepsilon(\tau)$ already
converge to $\Ddist_\varepsilon$ at a fixed exponential rate uniformly
in $x_0 \in \mathcal{E}$, there exists a $\tau_1 > 0$ such that
$d\bigl(\tilde{x}_\varepsilon(\tau),\pi(\tilde{x}_\varepsilon(\tau))\bigr) < \varepsilon$
for all $\tau \ge \tau_1$. Now consider $t_1 > 0$ arbitrary. Since
$x_\varepsilon(t) = \tilde{x}_\varepsilon(t/\varepsilon)$ we see that
when $\varepsilon \le t_1 / \tau_1$, also
$d\bigl(x_\varepsilon(t),\pi(x_\varepsilon(t))\bigr) < \varepsilon$
for all $t \ge t_1$, and finally
\begin{equation*}
  d(x_\varepsilon(t),\bar{x}_\varepsilon(t))
  \le d\bigl(x_\varepsilon(t),\pi(x_\varepsilon(t))\bigr) +
      d\bigl(\pi(x_\varepsilon(t)),\bar{x}_\varepsilon(t)\bigr)
  \in \mathcal{O}(\varepsilon).
\end{equation*}
This completes the proof of the second claim.
\end{proof}

\section{Beyond the limit of nonholonomic dynamics}
\label{sec:beyond-limit}

The Lagrange--d'Alembert equations were obtained as zeroth order term
in the expansion of $\tilde{X}_\varepsilon$ in $\varepsilon$. However,
one can continue and inductively find higher order terms in the
expansion of $\tilde{X}_\varepsilon$. These terms correspond to
effects of large, but finite friction forces, and will for example
contribute to drift normal to the nonholonomic constraint and energy
dissipation. The advantage of studying large, but finite friction in
this context is that $\tilde{X}_\varepsilon$ is still a vector field
on the lower-dimensional nonholonomic phase space $\Ddist$ as compared
to the full Euler--Lagrange equations on $\T Q$, while normal
hyperbolicity guarantees that this is a proper Taylor expansion of
a truly invariant and attractive subsystem. We shall here show how
these higher terms can be obtained and calculate the first order
correction term.

The `master equation' for obtaining the expansion is derived from the
coordinate expression for the invariant manifold $\Ddist_\varepsilon$,
given by $\eta = h_\epsilon(q,x)$. Let $f_\varepsilon(q,\xi,\eta)$
denote the `horizontal' components of $Y_\varepsilon$ for $(q',\xi')$
and let $g_\varepsilon(q,\xi,\eta)$ describe the `vertical' component
for $\eta'$. Taking a fast-time-$\tau$ derivative of the invariant
manifold equations, we find
\begin{align}
  \eta' &= \D h_\varepsilon(q,\xi) \cdot (q',\xi') \notag \\
  \Longleftrightarrow\quad
  g_\varepsilon(q,\xi,h_\varepsilon(q,\xi))
        &= \D h_\varepsilon(q,\xi) \cdot f_\varepsilon(q,\xi,h_\varepsilon(q,\xi)).
  \label{eq:Ddist-expand-master}
\end{align}
This equation we can now expand in powers of $\varepsilon$ to obtain
inductively the functions $h^{(i)}(q,\xi)$. Before starting this, let
us note the following: using the same expansion as
in~\eqref{eq:graph-expansion} for $f_\varepsilon$ and $g_\varepsilon$
we have for all $(q,\xi,\eta)$ that
\begin{alignat*}{2}
  f^{(0)}(q,\xi,\eta) &= 0, &\qquad h^{(0)}(q,\xi) &= 0, \\
  g^{(0)}(q,\xi,0) &= 0, &\qquad
  \D_3 g^{(0)}(q,\xi,0) &= -\bigl(\kappa_q^\sharp\,\nu_q^\flat\bigr)_{\!f}
\end{alignat*}
and $\D_3 g^{(0)}(q,\xi,0)$ is invertible as a linear map on
$\R^{n-k}$. Expanding~\eqref{eq:Ddist-expand-master} up to first
order in $\varepsilon$ then leads to
\begin{equation*}
  g^{(0)}(q,\xi,h^{(0)}(q,\xi)) = \D h^{(0)}(q,\xi) \cdot
  f^{(0)}(q,\xi,h^{(0)}(q,\xi))
\end{equation*}
at zeroth order, and is trivially satisfied as it
reduces to $0 = 0$. At first order we find
\begin{multline*}
  g^{(1)}(q,\xi,h^{(0)}(q,\xi)) + \D_3 g^{(0)}(q,\xi,h^{(0)}(q,\xi)) \cdot h^{(1)}(q,\xi) \\
  = \D h^{(1)}(q,\xi) \cdot f^{(0)}(q,\xi,h^{(0)}(q,\xi)) \\
    +\D h^{(0)}(q,\xi) \cdot \Bigl[
       f^{(1)}(q,\xi,h^{(0)}(q,\xi))
      +\D_3 f^{(0)}(q,\xi,h^{(0)}(q,\xi)) \cdot h^{(1)}(q,\xi) \Bigr]
\end{multline*}
which greatly simplifies to
\begin{equation*}
  g^{(1)}(q,\xi,0) + \D_3 g^{(0)}(q,\xi,0) \cdot h^{(1)}(q,\xi) = 0.
\end{equation*}
Using invertibility of $\D_3 g^{(0)}$, this can be solved for
$h^{(1)}$ and gives
\begin{equation}\label{eq:h-first-order}
  \begin{split}
    h^{(1)}(q,\xi)
    &= - \bigl[\D_3 g^{(0)}(q,\xi,0)\bigr]^{-1} \cdot g^{(1)}(q,\xi,0) \\
    &= (\nu_q^\sharp \, \kappa_q^\flat)_{\!f} \cdot \pr_\eta \cdot
       \bigl[ -\omega(f_q \cdot \xi) \cdot \xi - f_q^{-1} \cdot \kappa_q^\sharp \cdot \d V \bigr],
  \end{split}
\end{equation}
where we consider $\nu$ restricted to $\Ddist$. The term
$(\kappa_q^\flat)_{\!f} \cdot \pr_\eta \cdot \bigl[ -\omega(f_q \cdot \xi) \cdot
\xi - f_q^{-1} \cdot \kappa_q^\sharp \cdot \d V \bigr]$
can be identified as minus the reaction force
$F_c\colon \Ddist \to \Ddist^0$ in the nonholonomic constraint
picture, while $\eta = h^{(1)}(q,\xi)$ is the drift velocity violating
the nonholonomic constraint that is necessary to generate the same
force as $F_c$, but now due to the linear friction $F_f$ and up to
order $\varepsilon$.

To finally obtain the first order expansion of
$\tilde{X}_\varepsilon$, we have to expand
$f_\varepsilon(q,\xi,h_\varepsilon(q,\xi))$ up to order two. Here we
find
\begin{equation}\label{eq:f-second-order}
  \begin{split}
    \frac{1}{2} \left.\left(\der{}{\varepsilon}\right)^2\right|_{\varepsilon=0}
      f_\varepsilon(q,\xi,h_\varepsilon(q,\xi))
    &= \begin{aligned}[t]
         & \frac{1}{2}f^{(2)}(q,\xi,0) + \D_3 f^{(1)}(q,\xi,0) \cdot h^{(1)}(q,\xi) \\
         &+\frac{1}{2}\D_3 f^{(0)}(q,\xi,0) \cdot h^{(2)}(q,\xi)
          +\frac{1}{2}\D_3^2 f^{(0)}(q,\xi,0) \cdot h^{(1)}(q,\xi)^2
       \end{aligned}\\
    &= \D_3 f^{(1)}(q,\xi,0) \cdot h^{(1)}(q,\xi),
  \end{split}
\end{equation}
noting that also $f^{(2)} \equiv 0$. From~\eqref{eq:EL-friction-invar}
we read off that
\begin{equation*}
  \D_3 f^{(1)}(q,\xi,0) \cdot \eta
  = \Bigl( f_q \cdot \eta \, , \,
           -\pr_\xi\bigl[ \omega(f_q \cdot \eta) \cdot \xi
                         +\omega(f_q \cdot \xi ) \cdot \eta \bigr] \Bigr).
\end{equation*}
Putting everything together, we obtain
$\tilde{X}_\varepsilon = X_\textup{NH} + \varepsilon\,\tilde{X}^{(1)}
+ \mathcal{O}(\varepsilon^2)$ with
\begin{equation}\label{eq:X-first-order}
  \tilde{X}^{(1)} \Longleftrightarrow \left\{
  \begin{aligned}
    \dot{q}   &= f_q \cdot h^{(1)}(q,\xi) \\
    \dot{\xi} &= -\pr_\xi\, \Bigl[ \omega(f_q \cdot h^{(1)}(q,\xi)) \cdot \xi
                                  +\omega(f_q \cdot \xi) \cdot h^{(1)}(q,\xi) \Bigr]
  \end{aligned}\right.
\end{equation}
Note that with this correction term, $\tilde{X}_\varepsilon$ is not a
second order vector field on $\Ddist$ anymore since the drift velocity
$h^{(1)}(q,\xi)$ violates the nonholonomic constraint $\Ddist$.

\subsection{The Chaplygin sleigh}\label{sec:sleigh-expand}

We shall now follow this recipe to obtain a first order perturbation
for the nonholonomic dynamics of the Chaplygin sleigh, based on the
friction model used in Section~\ref{sec:sleigh-friction}. We cannot
apply the theory right away since the coordinates $u,v,\omega$ do not
correspond to an orthogonal frame. This can be seen from the
Lagrangian
\begin{equation}\label{eq:lagr-sleigh-frame}
  L = \frac{1}{2}m(u^2 + v^2) + \frac{1}{2}(I+ma^2)\omega^2 + ma \omega v
\end{equation}
with respect to these coordinates: it is purely kinetic, but the
metric is not diagonal with respect to $u,v,\omega$.
We replace $\omega$ by a new coordinate
$\psi = \omega + \frac{ma}{I + ma^2}v$ which diagonalizes the metric.
That is, the coordinates $u,v,\psi$ correspond to an orthogonal frame
and we have
\begin{equation}\label{eq:lagr-sleigh-ortho-frame}
  L = \frac{1}{2}m u^2 +\frac{1}{2}\frac{I m}{I + ma^2} v^2
     +\frac{1}{2}(I+ma^2) \psi^2.
\end{equation}
With respect to these coordinates the equations of
motion~\eqref{eq:sleight-friction-frame} become
\begin{equation}\label{eq:sleight-friction-ortho-frame}
  \begin{split}
    \dot{u} + \frac{ma^2 - I}{I + ma^2} v \psi
            + \frac{m a I}{(I + ma^2)^2} v^2 - a \psi^2 &= 0, \\[3pt]
    \dot{v} + u \psi -\frac{ma}{I + ma^2} u v
                      &= -\frac{I + ma^2}{\varepsilon m I} v, \\[3pt]
    \dot{\psi} + \frac{m a}{I + ma^2} u \psi
               - \frac{m^2 a^2}{(I + ma^2)^2} u v       &= 0.
  \end{split}
\end{equation}
Note that the terms without time derivatives on the left-hand side are
exactly those arising from the connection coefficients, see
Appendix~\ref{sec:conn-form-struc-func}. The friction force only
appears in the equation for $v$ since the frame component associated
to $v$ spans $\Ddist^\perp$, while $F_f$ takes values in $\Ddist^0$
and hence the term $\kappa^\sharp \cdot F_f$,
see~\eqref{eq:EL-friction-coords}, takes values in $\Ddist^\perp$.

To obtain the first order vector field~\eqref{eq:X-first-order}, we
first have to recover $h^{(1)}(x,y,\varphi,u,\psi) = h^{(1)}(u,\psi)$
from~\eqref{eq:h-first-order}. We note that $\xi = (u,0,\psi)$,
$\eta = (0,v,0)$, $V \equiv 0$, that $\omega(f_q\;\cdot\,)$ are
precisely the connection coefficients, and finally that
$(\nu_q^\sharp \, k_q^\flat)_f$ acting on $\eta$ is given by
\begin{equation*}
  (\nu_q^\sharp \, k_q^\flat)_f = 1 \cdot \frac{I m}{I + ma^2}.
\end{equation*}
This yields
\begin{equation*}
  h^{(1)}(u,\psi) = -\frac{I m}{I + ma^2} u \psi
\end{equation*}
since $u\psi$ is the only term in the $v$-component of the connection
one-form in~\eqref{eq:sleight-friction-ortho-frame} that survives when
we insert $\xi$, i.e.~$v = 0$. Then we recover for $\tilde{X}^{(1)}$:
\begin{equation*}
  \begin{alignedat}{2}
    (\dot{x},\dot{y}) &= -\frac{I m}{I + ma^2} u \psi (-\sin(\varphi),\cos(\varphi)), & \qquad
    \dot{\varphi}     &= -\frac{I m^2 a}{(I + ma^2)^2} u \psi, \\
    \dot{u}    &=  \frac{I m (ma^2 - I)}{(I + ma^2)^2} u\,\psi^2, & \qquad
    \dot{\psi} &= -\frac{I m^3 a^2}{(I + ma^2)^3} u^2\,\psi.
  \end{alignedat}
\end{equation*}
Here we used that $\dot{\varphi}=\omega=\psi - \frac{ma}{I + ma^2} v$
and that $\psi = \omega$ on $\Ddist$.

Now we can numerically integrate the vector field
$X_\textup{NH} + \varepsilon \tilde{X}^{(1)}$.
This should, on $\Ddist$, be a first order approximation of the
singularly perturbed vector field $X_\varepsilon$ with friction.
Indeed, the phase and trajectory plots in
Figures~\ref{fig:sleigh-allphaseplots}
and~\ref{fig:sleigh-alltrajplots} clearly show that the green curves
of the first order expansion $X_\textup{NH} + \varepsilon\tilde{X}^{(1)}$
converge more than linearly in $\varepsilon$ to the red curves of the
singularly perturbed system~$X_\varepsilon$.

\begin{figure}[htb]
  \captionsetup{margin=0.1cm}
  \centering
  \begin{minipage}[t]{7.5cm}
    \centering
    \includegraphics[width=7.5cm]{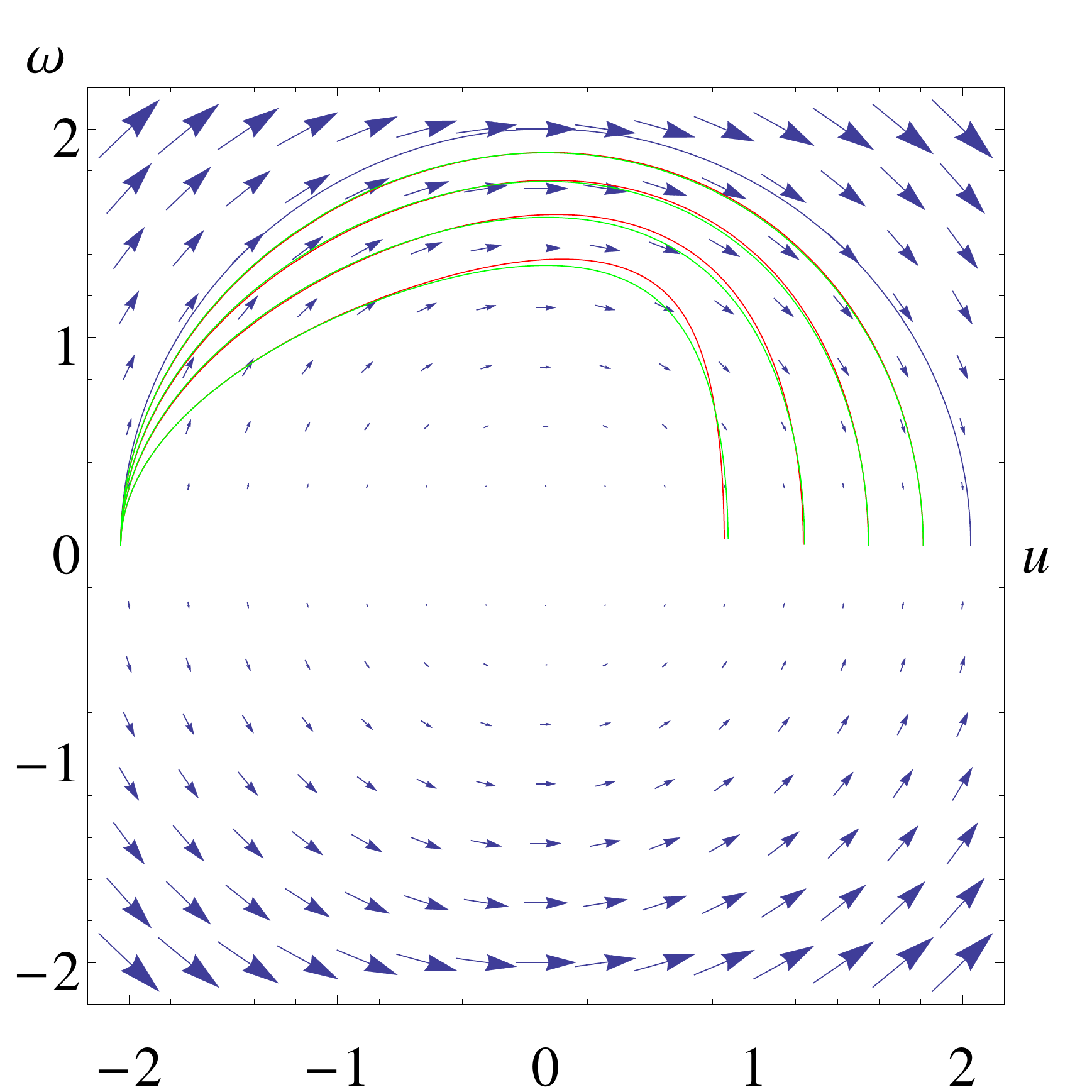}
    \caption{Phase plot of the $(u,\omega)$ coordinates with
      the nonholonomic system in blue, the system with friction in red
      and the first order approximation in green.}
    \label{fig:sleigh-allphaseplots}
  \end{minipage}
  \hspace{0.2cm}
  \begin{minipage}[t]{8cm}
    \centering
    \includegraphics[width=8cm]{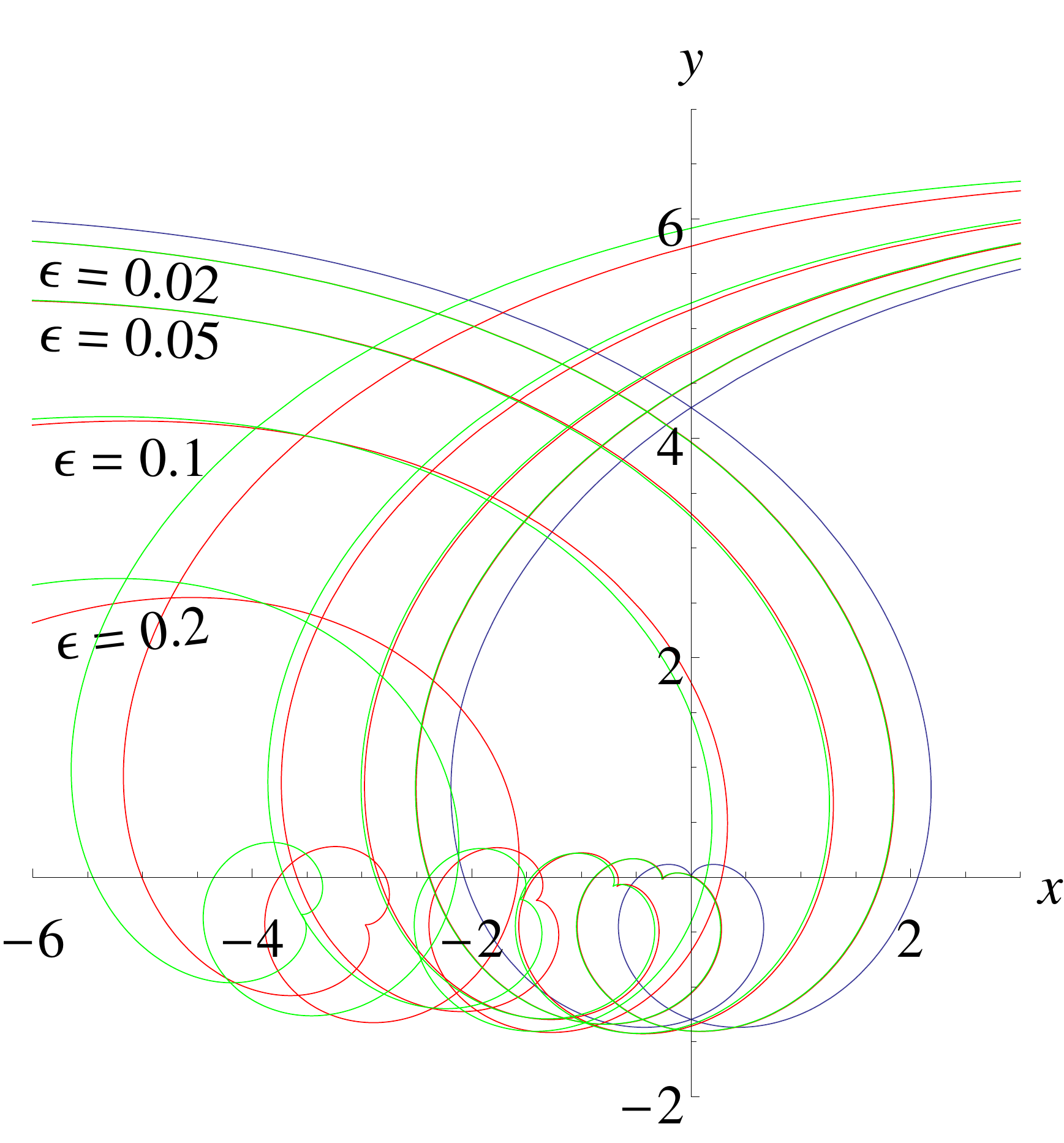}
    \caption{Trajectory plots associated to the orbits in
      Figure~\ref{fig:sleigh-allphaseplots}.}
    \label{fig:sleigh-alltrajplots}
  \end{minipage}
\end{figure}

\section*{Acknowledgments}

This topic was initially suggested to me by Hans Duistermaat as PhD
research project. Although my PhD research finally went another
direction, Hans' insights have been invaluable. Since then, this topic
has been in the back of my mind and during my stay at PUC-Rio I
started actively working on it again and gave a couple of lectures.
This paper grew out of the accompanying lecture notes; I like to thank
Alex Castro for his suggestion to do so. I want to thank Paula
Balseiro and Luis Garc\'ia Naranjo for stimulating discussions and
helpful remarks and Jair Koiller and the people at the mathematics
department at PUC-Rio for their hospitality. This research was
supported by the Capes grant PVE11-2012.

\appendix

\section{Connection form vs.\ structure functions}
\label{sec:conn-form-struc-func}

In this appendix we briefly recall the connection one-form as a method
to express (geodesic) equations of motion with respect to a moving
frame. We also relate this to the formulation using the structure
functions associated to the frame in terms of the Lie brackets of the
vector fields spanning the moving frame. The latter formulation is
more common in nonholonomic dynamics and known as Hamel's formalism,
see e.g.~\cite{Bloch2009:quasivel-sym-nonhol}.

Let $f\colon Q \times \R^n \to \T Q$ be a (local) moving frame, where
$f_\alpha \in \mathfrak{X}(Q)$ denote the individual vector fields
spanning the frame and $f_\alpha = f^i_\alpha\,\partial_i$ their
decomposition with respect to a basis induced by local coordinates
$q^i$. Let $\lambda\colon \T Q \to Q \times \R^n$ denote the inverse
of $f$, that is, a (local) trivialization of $\T Q$. Roman indices are
used for induced coordinates; Greek ones to index moving frame
coordinates. Now a connection $\nabla$ on $Q$ can be expressed with
respect to the frame $f$ as
\begin{equation}\label{eq:conn-form}
  \nabla_{f_\gamma} f_\beta
  = \omega(f_\gamma)^\alpha_\beta \, f_\alpha
  = \omega^\alpha_{\beta\gamma}   \, f_\alpha,
\end{equation}
where $\omega \in \Omega^1(Q;\textrm{End}(\R^n))$ is the connection
one-form and $\omega^\alpha_{\beta\gamma}$ are its coefficients.
On the other hand, the structure functions $C^\alpha_{\beta\gamma}$
encode the Lie brackets relative to a frame as follows:
\begin{equation}\label{eq:struc-func}
  \lie{f_\beta}{f_\gamma} = C^\alpha_{\beta\gamma} \, f_\alpha.
\end{equation}

Let $\nabla$ be the Levi-Civita connection associated to a metric
$\kappa$ on $Q$. According
to~\cite[Prop.~2.3]{Kobayashi1963:diff-geom-vol1} we have
\begin{equation*}
  2\kappa(\nabla_X Y, Z)
  = \begin{aligned}[t]
    &X \cdot \kappa(Y,Z) + Y \cdot \kappa(X,Z) - Z \cdot \kappa(X,Y) \\
    &+\kappa(\lie{X}{Y},Z) + \kappa(\lie{Z}{X},Y) + \kappa(\lie{Z}{Y},X)
  \end{aligned}
\end{equation*}
for any vector fields $X,Y,Z \in \mathfrak{X}(Q)$.
When we decompose the vector fields with respect to the frame $f$,
i.e.\ write $X = X^\alpha f_\alpha$,
this yields the following relation between the connection
coefficients, the metric, and the structure functions:
\begin{equation}\label{eq:rel-conn-form-struc-func}
  \omega^\eta_{\gamma\delta}
  = \frac{1}{2} \kappa^{\eta\beta}\bigl(
      f_\delta(\kappa_{\gamma\beta}) + f_\gamma(\kappa_{\delta\beta})
                                     - f_\beta(\kappa_{\delta\gamma}) \bigr)
   +\frac{1}{2} \bigl(
      C^\eta_{\delta\gamma}
     +\kappa_{\alpha\gamma} \kappa^{\eta\beta} C^\alpha_{\beta\delta}
     -\kappa_{\alpha\delta} \kappa^{\eta\beta} C^\alpha_{\beta\gamma}
    \bigr).
\end{equation}
Note that when $f$ is a holonomic, coordinate-induced frame, then the
structure constants are zero and the first terms reduce to the usual
Christoffel symbols. Conversely, the torsion-freeness of
$\nabla$, i.e.\ $\lie{X}{Y} = \nabla_X Y - \nabla_Y X$ for any
$X,Y \in \mathfrak{X}(Q)$, implies that
\begin{equation*}
  C^\alpha_{\beta\gamma} = \omega^\alpha_{\gamma\beta} - \omega^\alpha_{\beta\gamma}.
\end{equation*}

We return to the formulation of Lagrangian mechanics.
For simplicity we consider a purely kinetic\footnote{%
  A potential term would add a force field that transforms
  covariantly, hence is trivial to add afterwards.%
} Lagrangian $L(q,\dot{q}) = \frac{1}{2}\kappa_q(\dot{q},\dot{q})$, so
its Euler--Lagrange equations correspond to the geodesic equations
$\nabla_{\dot{q}} \dot{q} = 0$, or in induced coordinates,
$\ddot{q}^i = - \Gamma^i_{jk} \dot{q}^j \dot{q}^k$, where $\Gamma$ are
the Christoffel symbols of the metric $\kappa$. Denoting frame
coordinates by $v^\alpha$ (these are also called quasi-velocities),
we have
\begin{equation}\label{eq:EL-conn-form}
  \dot{v}^\alpha = -\omega^\alpha_{\beta\gamma} v^\beta v^\gamma.
\end{equation}
The coefficients $\omega^\alpha_{\beta\gamma}$ play the same role as
the Christoffel symbols $\Gamma^i_{jk}$, but with respect to the
moving frame $f$. They are related by
\begin{equation}\label{eq:form-Christoffel-sym}
  \omega^\alpha_{\beta\gamma}
  = \lambda^\alpha_i \Gamma^i_{jk} f^j_\beta f^k_\gamma
   +\lambda^\alpha_i (\partial_k f^i_\beta)  f^k_\gamma
\end{equation}
since $\Gamma$ represents the connection with respect to the local
coordinate frame and~\eqref{eq:form-Christoffel-sym} expresses the
change to the frame $f$.

On the other hand, the Euler--Lagrange equations with respect to a
moving frame are given
by, see e.g.~\cite[Prop.~1.4.6]{Cushman2010:geom-nonhol-systems},
\cite[Eq.~(2.5)]{Bloch2009:quasivel-sym-nonhol},
\begin{equation*}\label{eq:EL-moving-frame}
  \der{}{t}\pder{\mathcal{L}}{v^\alpha} - \pder{\mathcal{L}}{q^i}\,f^i_\alpha
  - \pder{\mathcal{L}}{v^\gamma} C^\gamma_{\beta\alpha} v^\beta = 0
\end{equation*}
with $\mathcal{L} = L \circ f\colon Q \times \R^n \to \R$ the
Lagrangian with respect to the frame. In our case this boils down to
\begin{align}
0 &= \der{}{t}\pder{\mathcal{L}}{v^\alpha} - \pder{\mathcal{L}}{q^i}\,f^i_\alpha
    -\pder{\mathcal{L}}{v^\gamma} C^\gamma_{\beta\alpha} v^\beta \notag\\
  &= \der{}{t}\Bigl[ \kappa_{\alpha\beta} v^\beta \Bigr]
    -\frac{1}{2} f_\alpha\cdot\kappa_{\beta\gamma}\, v^\beta v^\gamma
    -\kappa_{\delta\gamma} C^\delta_{\beta\alpha} v^\beta v^\gamma \notag\\
  &= \kappa_{\alpha\beta}\dot{v}^\beta + f_\gamma\cdot\kappa_{\alpha\beta}\, v^\beta v^\gamma
    -\frac{1}{2} f_\alpha\cdot\kappa_{\beta\gamma}\, v^\beta v^\gamma
    -\kappa_{\delta\gamma} C^\delta_{\beta\alpha} v^\beta v^\gamma \notag\\
\Leftrightarrow\quad
\label{eq:EL-struc-func}
\dot{v}^\alpha
  &= -\kappa^{\alpha\delta}\bigl[
                   f_\gamma\cdot\kappa_{\delta\beta}
       -\frac{1}{2}f_\delta\cdot\kappa_{\beta\gamma}
       +\kappa_{\eta\gamma} C^\eta_{\beta\alpha} \bigr] v^\beta v^\gamma.
\end{align}
Note that by equating~\eqref{eq:EL-conn-form}
and~\eqref{eq:EL-struc-func} we also find an explicit relation between
the connection one-form $\omega$, the metric $\kappa$ and the
structure functions $C$. It differs
from~\eqref{eq:rel-conn-form-struc-func} only by terms that are
anti-symmetric in the two lower indices, since these cancel in the
geodesic equation.

\bibliographystyle{amsalpha}
\bibliography{bibfile}

\end{document}